\def\R{\mathbb{R}}
\newcommand{\bmat}{\left[\begin{matrix}}
\newcommand{\emat}{\end{matrix}\right]}
\numberwithin{equation}{section}
\numberwithin{table}{section}
\numberwithin{figure}{section}
\newtheorem{theorem}{Theorem}[section]
\newtheorem{lemma}{Lemma}[section]
\newtheorem{corollary}{Corollary}[section]
\theoremstyle{remark}
\newtheorem*{remark}{Remark}
\theoremstyle{definition}
\newcommand{\Z}{\mathbb{Z}}
\newcommand{\Q}{\mathbb{Q}}
\newcommand{\Gr}{\mathrm{Gr}}
\newcommand{\GL}{\mathrm{GL}}
\newcommand{\SL}{\mathrm{SL}}
\newcommand{\rank}{\mathrm{rank\,}}
\newcommand{\rk}{\mathrm{rk\,}}
\newcommand{\spn}{\mathrm{span}}
\newcommand{\tr}{\mathrm{tr}}
\newcommand{\lat}{\mathcal}
    \title{Counting rational points of a Grassmannian}
    \author{Seungki Kim}
\begin{document}
\subjclass[2020]{11H06, 11G50, 14G05} 
\keywords{rational points, Grassmannians, flag varieties, Manin's conjecture.}  

\maketitle

\begin{abstract}
We prove an estimate on the number of rational points on the Grassmannian variety of bounded twisted height, refining the classical results of Schmidt (\cite{Sch}) and Thunder (\cite{Thu2}) over the rational field: most importantly, our formula counts all points. Among the consequences are a couple of new implications on the classical subject of counting rational points on flag varieties.
\end{abstract}

\section{Introduction}

\subsection{Main result}

For $\lat L \subseteq \R^n$ a lattice, $1 \leq d < \rk \lat L$ and $H > 0$, let $P(\lat{L}, d, H)$ be the number of primitive rank $d$ sublattices of $\lat{L}$ of determinant less than or equal to $H$. The purpose of this paper is to investigate the quantitative behavior of $P(\lat{L}, d, H)$. The earliest result of this kind goes back to the mid-twentieth century, due to W. Schmidt (\cite{Sch}):

\begin{theorem}[Schmidt \cite{Sch}, Theorem 1] \label{thm:schmidt}
Let
\begin{align*}
a(n, d) &= \frac{1}{n}\binom{n}{d} \prod_{i=1}^{d}\frac{V(n-i+1)}{V(i)} \cdot \frac{\zeta(i)}{\zeta(n-i+1)}, \\
b(n, d) &= \max\left(\frac{1}{d}, \frac{1}{n-d}\right),
\end{align*}
where $V(i) := \pi^{i/2}/\Gamma(i/2+1)$ is the volume of the unit ball in $\R^i$ and $\zeta(s)$ is the Riemann zeta function, except that we understand $\zeta(1)=1$ for convenience. Then
\begin{equation*}
P(\Z^n, d, H) = a(n, d)H^n + O(H^{n-b(n,d)}),
\end{equation*}
where the implicit constant depends on $n$ only. 
\end{theorem}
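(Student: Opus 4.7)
The plan is to relate $P(\Z^n, d, H)$ to the count $N(\Z^n, d, H)$ of \emph{all} (not necessarily primitive) rank-$d$ sublattices of $\Z^n$ of determinant $\leq H$ via Möbius inversion, and then to estimate $N$ directly by a fundamental-domain / lattice-point argument on $\GL_d(\R)$.

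First, I would set up the Plücker parametrization. Each primitive rank-$d$ sublattice $M = \Z v_1 + \cdots + \Z v_d \subseteq \Z^n$ corresponds bijectively (up to sign) to the primitive decomposable vector $v_1 \wedge \cdots \wedge v_d \in \Lambda^d \Z^n$, whose Euclidean norm equals $\det M$. The Hodge star exchanges $M$ with its orthogonal complement $M^\perp \cap \Z^n$, preserving primitivity and determinant, so $P(\Z^n, d, H) = P(\Z^n, n-d, H)$. I may therefore assume $d \leq n-d$, in which case $b(n,d) = 1/(n-d)$.

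Next, I would relate $P$ to $N$. Every rank-$d$ sublattice $\Lambda$ sits uniquely inside its saturation $M = (\Lambda \otimes \Q) \cap \Z^n$ as a finite-index sublattice, with $\det \Lambda = [M:\Lambda] \cdot \det M$. The number of index-$k$ sublattices in any rank-$d$ lattice is the universal function $j_d(k)$ whose Dirichlet series is $\prod_{i=0}^{d-1}\zeta(s-i)$. This yields the identity
\[
N(\Z^n, d, H) = \sum_{k \geq 1} j_d(k) \, P(\Z^n, d, H/k),
\]
from which $P$ is recovered by Möbius-type inversion. The inversion contributes the factor $\prod_{i=1}^d 1/\zeta(n-i+1)$ to $a(n,d)$ and preserves any power-saving error term, since the tail in $k$ converges absolutely once the exponent is better than $n-1$.

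I would then estimate $N(\Z^n, d, H)$ directly. Rank-$d$ sublattices of $\Z^n$ are in bijection with $\GL_d(\Z)$-orbits of $d$-tuples of $\Z$-linearly independent vectors in $\Z^n$, the determinant being the square root of the Gram determinant. Choosing a Siegel fundamental domain for $\GL_d(\Z)\backslash\GL_d(\R)$ and using the Iwasawa decomposition, $N$ becomes a lattice-point count for integer $n \times d$ matrices inside a region cut out by the Gram-determinant bound. The main term $c(n,d) H^n$ emerges as the volume of this region, which after integrating out the Siegel unipotent and torus factors produces the ball-volume ratios $V(n-i+1)/V(i)$ appearing in $a(n,d)$.

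The main obstacle is the error term. Controlling $N(\Z^n, d, H) - c(n,d) H^n$ at the rate $O(H^{n-b(n,d)})$ requires sharp lattice-point counts in the Siegel cusp, where the counting region's boundary is curved and its thickness degenerates along cuspidal directions. The exponent $b(n,d) = \max(1/d, 1/(n-d))$ is dictated by the slowest-decaying direction: anisotropic sublattices with one exceptionally short vector contribute errors of order $H^{n-1/d}$ or, through the dual $M^\perp$, $H^{n-1/(n-d)}$. I would handle this by induction on $\min(d,n-d)$ combined with a careful truncation of the cusp, following the strategy of Schmidt, isolating the contribution of sublattices with a short successive minimum and bounding it separately by a covering argument.
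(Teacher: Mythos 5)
Your high-level plan has some of the right ingredients, but the crucial step is not actually addressed. The duality $P(\Z^n,d,H)=P(\Z^n,n-d,H)$ and the M\"obius relation between $P$ and $N$ are both used in the paper (duality for the $d=n-1$ base case in Section 2, and the $P\to N$ inversion in Section 7.1, though you invert in the opposite direction $N\to P$, which is fine for the exponents provided $b(n,d)<n-d$). The main-term volume computation over a Siegel fundamental domain for $\GL_d(\Z)\backslash\GL_d(\R)$ is also a reasonable way to produce $c(n,d)H^n$. However, you identify the error term as ``the main obstacle'' and then essentially defer to ``the strategy of Schmidt'' without supplying the argument --- and this is precisely where the entire content of the theorem lies. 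Worse, there is an internal mismatch: Schmidt's actual method is an \emph{upward} inductive count (build rank-$d$ sublattices from rank-$(d-1)$ ones, with summation by parts to control the error), not a lattice-point count in a Siegel domain, so appealing to his strategy does not complete the fundamental-domain setup you have built. The paper takes yet a third route --- a \emph{downward} count: project onto the hyperplane orthogonal to a short vector of $L$, split $P=P^1+P^2$ according to whether the projection preserves rank, reduce $P^2$ to a lower-rank count and $P^1$ to a Hecke-type sum, then estimate by Riemann--Stieltjes integration with a case analysis comparing successive minima to $H^{1/d}$. It is far from clear that your direct lattice-point count produces the exponent $n-b(n,d)=n-\max(1/d,1/(n-d))$, because the counting region $\{\det\le H\}$ has a long cusp in the Iwasawa torus direction, and extracting a power-saving error in such thin, non-compact regions is exactly what the inductive structure (either Schmidt's upward or the paper's downward) is designed to handle. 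As written, the proposal has a genuine gap at the error-term estimate and cannot be completed by the tools it sets up.
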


For $\lat L \subseteq \R^n$ of full rank, $P(\lat{L}, d, H)$ may also be understood in terms of a counting problem on the Grassmannian variety $\Gr(n,d)$ consisting of the $d$-dimensional subspaces of $\R^n$. A rational point $P$ on $\Gr(n,d)$ is a $d$-dimensional subspace such that $P \cap \Z^n$ is a rank $d$ sublattice of $\Z^n$. Its height is given by the determinant of $P \cap \Z^n$, and given a real $n \times n$ matrix $L$, its height twisted by $L$ is given by the determinant of the rank $d$ lattice
\begin{equation*}
(P \cap \Z^n) L = \{vL : v \in P \cap \Z^n\} \subseteq \R^n
\end{equation*}
(we write vectors horizontally, so that matrices multiply from the right). Observe that, if $L$ and $L'$ are two $n \times n$ matrices whose row vectors generate the same lattice $\lat L$, i.e. $L = gL'$ for some $g \in \GL(n,\Z)$, then the number of the rational points whose heights are bounded by $H$ is the same, regardless of whether one twists the height by $L$ or $L'$. In addition, one checks that this number is precisely $P(\lat{L}, d, H)$ as defined above. We refer the reader to Thunder (\cite[Introduction]{Thu1}, \cite[Part I]{Thu2}) for the general definition of a twisted height, in which it is first introduced.

From this perspective, Thunder (\cite{Thu2}) proved a vast generalization of Theorem \ref{thm:schmidt} above, extending it to any lattice and to any number field $K$ (here $\mathcal{O}_K$-modules play the role of lattices). His result, from the 1990's, remains state-of-the-art to this day. We state his result in case $K = \Q$:

\begin{theorem}[Thunder \cite{Thu2}, Theorem 3] \label{thm:thunder}
Let $\lat L \subseteq \R^n$ be a lattice of full rank. In addition to the notations in Theorem \ref{thm:schmidt}, define
\begin{equation*}
\lat L_{i} = \lat L \cap \spn_\R(v_1, \ldots, v_{i}),
\end{equation*}
where $v_1, \ldots, v_n$ are choices of linearly independent vectors in $\lat L$ such that  $\|v_i\| = \lambda_i(\lat L)$, where $\lambda_i(\lat L)$ is the $i$-th successive minimum of $\lat L$ defined by
\begin{equation*}
\lambda_i(\lat L) = \inf\{r > 0: \dim \spn_\R(\lat L \cap B(r)) \geq i \},
\end{equation*}
where $B(r)$ here is the closed ball at origin of radius $r$ in $\R^n$. Let $P_{\lat L_{n-d}}(\lat L, d, H)$ be the number of rank $d$ sublattices of $\lat L$ of determinant $\leq H$ whose intersection with $\lat L_{n-d}$ is trivial. Then

\begin{equation*}
P_{\lat{L}_{n-d}}(\lat L, d, H) = a(n,d)\frac{H^n}{(\det \lat L)^d} + O\left(\frac{H^{n-b(n,d)}}{(\det \lat L)^{d - b(n,d)}(\det \lat L_{n-d})^{b(n,d)}}\right),
\end{equation*}
where the implicit constant depends only on $n$.

\end{theorem}

A notable feature of Theorem \ref{thm:thunder} is that it provides an explicit description of the dependence of the error term on the successive minima $\lambda_1(\lat L), \ldots, \lambda_n(\lat L)$ of $\lat L$ (observe that $\det \lat L_i \sim \lambda_1\lambda_2 \ldots \lambda_i$ by the Minkowski's second theorem). Informally speaking, it reflects the ``skewness'' of the lattice: in case $\lat L$ is severely skewed, in the sense that $\lambda_i(\lat L)$ is much smaller than $\lambda_{i+1}(\lat L)$ for some $i$, one expects a different behavior of the error term than the case in which most $\lambda_i(\lat L)$ are about equal. Theorem \ref{thm:thunder} may be seen as a realization of this intuition. 

However, Thunder (\cite{Thu2}) does not provide a corresponding estimate for $P(\lat L, d, H)$, remarking that it would ``be a cumbersome task.'' In the present paper, we introduce a method that circumvents this difficulty, and prove

\newtheorem{innercustomthm}{Theorem}
\newenvironment{theorem'}[1]
  {\renewcommand\theinnercustomthm{#1}\innercustomthm}
  {\endinnercustomthm}

\begin{theorem'}{1.3'}\label{thm:main'}
Continue with the notations in Theorems \ref{thm:schmidt} and \ref{thm:thunder} above. Then for all $H > 0$,
\begin{equation} \label{eq:main'}
P(\lat L, d, H) = a(n, d)\frac{H^n}{(\det \lat L)^d} + O\left(1 + \left(\frac{H}{\lambda_1(\lat L)^d}\right)^{n-b(n,d)} \right),
\end{equation}
where the implied constant depends only on $n$.
\end{theorem'}

In fact, we prove the more precise

\begin{theorem}\label{thm:main}
Let $\lat L \subseteq \R^n$ be a lattice of full rank. For all $H > 0$,
\begin{equation} \label{eq:main}
P(\lat L, d, H) = a(n, d)\frac{H^n}{(\det \lat L)^d} + O\left(\sum_{j \in E_{n,d}} b_j(\lat L)H^{\gamma_j}\right),
\end{equation}
where the implied constant in the big-O notation depends only on $n$, and $E_{n,d}$ is a finite set of indices, of cardinality less than $n^{3n}$ for $n \geq 2$ . Each $j \in E_{n,d}$ is associated with $\gamma_j \in [0, n- b(n,d)]$ and $b_j$ of the form
\begin{equation*}
b_j(\lat L) = \prod_{i=1}^n (\det \lat L_i)^{-\beta_i}
\end{equation*}
for some real $\beta_i \geq 0$ such that $\sum_{i=1}^n i\beta_i = d\gamma_j$. This makes the right-hand side of \eqref{eq:main} scale-invariant i.e. it remains unchanged if $\lat L$ and $H$ are replaced by $c\lat L$ and $c^dH$, respectively.

In particular, the leading error term is
\begin{equation*}
\frac{H^{n-b(n,d)}}{(\det \lat L)^{d - b(n,d)}(\det \lat L_{n-d})^{b(n,d)}},
\end{equation*}
as in Theorem \ref{thm:thunder}.

\end{theorem}

\begin{remark}
If $\lat L$ is of rank $m < n$, we may adapt Theorem \ref{thm:main} by observing that, for any isometry $\iota: \R^m \rightarrow \spn_\R(\lat L)$, it holds that $P(\lat L, d, H) = P(\iota^{-1}(\lat L), d, H)$. The same applies to the results of the similar flavor that we state below.
\end{remark}

\begin{remark}
The estimate $|E_{n,d}| < n^{3n}$ is an extremely crude one, provided only to assure that the sum is finite. Describing $E_{n,d}$ exactly from our computations would be quite a laborious task that would not yield a pretty formula and whose fruits seem unclear. 

Thus one may feel that the complicated statement of Theorem \ref{thm:main} is unnecessary. However, we leave it as it is, since the precise knowledge of at least a part of the error term may be useful for certain applications. For instance, to prove Corollary \ref{cor:3} below, we really need the (close-to-)optimal version of the leading error term stated as in Theorem \ref{thm:main}. If we could compute the coefficients $b_j$'s optimally for more $j$'s, we expect to be able to strengthen Corollary \ref{cor:3} accordingly.

We also state the subsequent results in the precise form. If one desires simplification, one may replace $b_j$ by an appropriate power of $\lambda_1$'s, as in \eqref{eq:main'}.
\end{remark}

Before we go on to discuss a few applications of Theorem \ref{thm:main}, let us present a few of its variants that may also be of use.

\begin{theorem}\label{cor:1}
Let $N(\lat L, d, H)$ be the number of (not necessarily primitive) rank $d$ sublattices of $\lat L$ of determinant $\leq H$. Also let
\begin{equation*}
c(n, d) = a(n,d)\prod_{i=1}^{d} \zeta(n-i+1).
\end{equation*}

Then similarly to Theorem \ref{thm:main}, for a full-rank lattice $\lat L \subseteq \R^n$ we have
\begin{equation*}
N(\lat L, d, H) = c(n,d)\frac{H^n}{(\det \lat L)^d} + O\left(\sum_{j \in F_{n,d}} b'_j(\lat L)H^{\gamma_j}\right),
\end{equation*}
where the implied constant in the big-O notation depends only on $n$, and $F_{n,d} $ is a set of indices of cardinality at most $n^{3n}$. The description of $b'_j$ (resp. $\gamma_j$) is the same as that of $b_j$ (resp. $\gamma_j$) in Theorem \ref{thm:main}. Also, if $d < n-1$, the leading error term is the same as in Theorem \ref{thm:main}. If $d=n-1$, then the largest $\gamma_j$ for $j \in F_{n,d}$ may be chosen to be $n-1+\eta$, for any $\eta > 0$.
\end{theorem}

\begin{theorem}\label{cor:2}
For a lattice $\lat L \subseteq \R^n$, choose a primitive sublattice $\lat S \subseteq \lat L$ of rank $\leq n-d$, and let $P_\lat{S}(\lat L, d, H)$ be the number of primitive rank $d$ sublattices of $\lat L$ whose intersection with $\lat S$ is trivial. For $\lat L \subseteq \R^n$ of full rank, we have
\begin{equation*}
P_\lat{S}(\lat L, d, H) = a(n, d)\frac{H^n}{(\det \lat L)^d} + O\left(\sum_{j \in G_{n,d}} b_j(\lat L)H^{\gamma_j}\right),
\end{equation*}
where again the description of the error term is identical to that of Theorem \ref{thm:main}, except that $G_{n,d}$ is now a set of cardinality less than $3n^{3n}$. In particular, the leading error term is the same as in Theorem \ref{thm:main}. Moreover, this formula is independent of $\lat S$.

The analogous statement holds for $N_\lat{S}(\lat L, d, H)$, which counts both primitive and non-primitive lattices.
\end{theorem}

\subsection{Applications}


Below we demonstrate a few immediate applications of Theorem \ref{thm:main} and the techniques used in its proof. Its main strength lies in the fact that it counts all the sublattices, and that it provides information regardless of how skewed the given lattice is, in particular relative to $H$. In comparison, its precedent Theorem \ref{thm:thunder} misses the sublattices that nontrivially intersect $\lat L_{n-d}$, and thus it does not say anything about the lattices for which $\det \lat L / \det \lat L_{n-d} > H$.

We expect there to be more uses of Theorem \ref{thm:main}; for instance, see a recent work of Le Boudec (\cite{leb}), which employs the $d=1$ case (due to Schmidt (\cite{Sch}), see \eqref{eq:case1} below) as the main device.

\subsubsection{Rational points of flag varieties}

It is natural to expect that a counting formula on Grassmannians should yield a counting formula for general flag varieties. Indeed, Thunder (\cite{Thu2}) derives such a formula as a relatively simple application of Theorem \ref{thm:thunder}. We present its simplest case to initiate the discussion:

\begin{theorem}[Thunder \cite{Thu2}, Theorem 5]\label{cor:3thu}
Let $\lat L \subseteq \R^n$ be a lattice of rank $n$, and suppose $H$ is sufficiently large. Then the number of flags $\lat S^e \subseteq \lat S^d \subseteq \lat L$ of type $(e,d)$ (hence $\rk \lat S^i = i$) such that $\lat S^i \cap \lat L_{n-i} = 0$ for $i \in \{e, d\}$, and $\lat S^e \cap \lat S^d_{d-e} = 0$, whose height twisted by $\lat L$ is at most $H$ is
\begin{equation}\label{eq:l=2thu}
\frac{aH}{(\det \lat L)^d}\log\frac{H}{(\det \lat L)^d} + O\left(\frac{H}{(\det \lat L)^{d-b(n,d)(n-d)/n}(\det \lat L_{n-d})^{b(n,d)}}\right),
\end{equation}
where $a$ is some explicit constant depending only on $n, d, e$, and the implicit constant in the error depends only on $n$.

\end{theorem}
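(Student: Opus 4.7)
The approach is to fibrate the flag count over the choice of the larger subspace: every admissible flag $(S_e\subseteq S_d)$ is a primitive rank-$d$ sublattice $S_d\subseteq L$ (with $S_d\cap L^{(|-d)} = 0$) together with a primitive rank-$e$ sublattice $S_e\subseteq S_d$ (with $S_e\cap S_d^{(|-e)} = 0$), obeying the flag-height bound that factors as the product of component heights. Both layers are controlled by Theorem~\ref{thm:main}: the inner layer by applying it with $S_d$ in place of $L$, and the outer layer by applying it directly to $(L,d)$. The transversality conditions on each layer are exactly the ones handled by Corollary~\ref{cor:2}, so they can be imposed without effort.

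For the main term, condition on $S_d$ and invoke Theorem~\ref{thm:main}/Corollary~\ref{cor:2} applied to the rank-$d$ lattice $S_d$ to count admissible $S_e\subseteq S_d$ with $\det S_e$ at most the height budget $H/\det S_d$ remaining after $S_d$ is chosen. This gives an inner main term of the form $a(d,e)\, H^d/(\det S_d)^{d+e}$. Aggregating over $S_d$ by Abel summation against $P(L,d,X)\sim a(n,d)X^n/(\det L)^d$ yields
\begin{equation*}
\frac{n\,a(n,d)\,a(d,e)\,H^d}{(\det L)^d}\int_{X_0}^{H} X^{\,n-d-e-1}\,dX,
\end{equation*}
where $X_0$ is the smallest possible value of $\det S_d$, of order $\det L^{(|-(n-d))}$, hence of order $(\det L)^{d/n}$ in the non-degenerate regime. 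In the critical exponent case $n = d+e$ exhibited by this simplest two-step flag, the integrand degenerates to $X^{-1}$ and the integral evaluates to $\log(H/X_0)\sim \log(H/(\det L)^d)$, producing precisely the logarithmic factor in \eqref{eq:l=2thu}; in non-critical dimensions a pure power of $H$ would appear instead, changing the shape of the main term.

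The chief obstacle is the error bookkeeping. Each inner application of Theorem~\ref{thm:main} contributes an error $\sum_\gamma b_\gamma(S_d)(H/\det S_d)^\gamma$, and each $b_\gamma(S_d)$ is a reciprocal of a product of $\det S_d^{(|-i)}$'s, which \emph{vary} with $S_d$. To integrate uniformly one exploits the outer transversality $S_d\cap L^{(|-d)} = 0$: this forces $S_d$ to pick up directions from the top part of the Minkowski filtration of $L$, and hence bounds $\det S_d^{(|-i)}$ from below in terms of $\det L^{(|-(n-d+i))}$ by constants depending only on $n$. With those uniform bounds in hand, each $b_\gamma(S_d)$ is replaced by a constant multiple of a product of $\det L^{(|-j)}$'s, and the integrated error becomes a finite sum indexed by $\gamma\in\Q\cap[0,d)$. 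The final step is a case analysis on these $\gamma$'s against the outer error from Theorem~\ref{thm:main} at $(L,d)$, verifying that every term is dominated by the single bound of \eqref{eq:l=2thu}. The hypothesis ``$H$ sufficiently large'' enters here to guarantee $X_0\ll H$, so that the critical integral genuinely contributes $\log(H/(\det L)^d)>0$ and absorbs the lower-order integrals produced by the error terms.
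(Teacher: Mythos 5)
Your proposed fibration over $S_d$ is the right general strategy and is the same one the paper uses to prove the closely related Corollary~\ref{cor:3} in Section~7.3. (Note that the statement you are attempting is actually cited from Thunder's paper~\cite{Thu2}; the paper does not re-prove it, but proves Corollary~\ref{cor:3} by essentially this fibration.) However, the proposal contains a concrete error in the height normalization that would break the argument.

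You treat the flag height as the \emph{product} $\det S_e \cdot \det S_d$, so that the inner budget is $\det S_e \le H/\det S_d$ and the inner count becomes $a(d,e) H^d/(\det S_d)^{d+e}$, and only in the ``critical case'' $n=d+e$ does the Abel-summed integral $\int X^{n-d-e-1}\,dX$ degenerate to $\int dX/X$. But the flag height, as the paper states explicitly right after the theorem, is $(\det S_e)^d(\det S_d)^{n-e}$. With the correct weighting the inner budget is $\det S_e \le \bigl(H/(\det S_d)^{n-e}\bigr)^{1/d}$, and the inner count is $a(d,e)\,H/(\det S_d)^n$; integrating this against $dP(L,d,X)\sim n\,a(n,d)X^{n-1}(\det L)^{-d}\,dX$ produces the integrand $dX/X$ identically, for \emph{every} $(e,d,n)$ with $e<d<n$. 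This is precisely the Franke--Manin--Tschinkel phenomenon (degree of the logarithm equals $\rk\operatorname{Pic}-1$) --- the log is built into the weighting, not a numerical coincidence of $n=d+e$. Your plan as written would produce no logarithm for $n\neq d+e$ and so cannot yield the claimed asymptotic.

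A second symptom of the same error: you set $X_0\sim\det L^{(|-(n-d))}\sim(\det L)^{d/n}$ and write ``$\log(H/X_0)\sim\log(H/(\det L)^d)$''; these are not asymptotically equal (the exponents $d/n$ and $d$ differ). With the correct budget, the integration range runs from $\varepsilon_d$ up to $(H/\varepsilon_e^d)^{1/(n-e)}$, and it is the combination $\varepsilon_e^d\varepsilon_d^{n-e}$ that assembles to $(\det L)^d$ (up to a constant) once the transversality hypotheses $S_i\cap L^{(|-i)}=0$ are in force. Finally, your discussion of the transversality conditions via Corollary~\ref{cor:2} is plausible for $S_d\cap L^{(|-d)}=0$ and for $S_e\cap S_d^{(|-e)}=0$, but the third condition $S_e\cap L^{(|-e)}=0$ is a constraint relative to $L$, not to $S_d$, and does not reduce to an application of Corollary~\ref{cor:2} to the inner layer without further argument.
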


In this context, the height of the flag $\lat S^e \subseteq \lat S^d$ is the quantity $(\det \lat S^e)^d(\det \lat S^d)^{n-e}$; see e.g. Thunder (\cite{Thu2}) for details.

In comparison, we can derive from Theorem \ref{thm:main} the following

\begin{corollary} \label{cor:3}
Let $\lat L$ be a lattice of rank $n \geq 3$, and $H$ be sufficiently large --- more precisely, $\lambda_n(\lat L) \ll_n H^{1/nd}$. Then the number of flags $\lat S^e \subseteq \lat S^d \subseteq \lat L$ of type $(e,d)$ such that $\lat S^e \cap \lat S^d_{d-e} = 0$ whose height twisted by $\lat L$ is at most $H$ is
\begin{equation}\label{eq:l=2me}
\frac{aH}{(\det \lat L)^d} \log \frac{H}{\varepsilon_e^d \varepsilon_d^{n-e}} + O\left( \sum_{j \in E_{n,e,d}} b_j(\lat L) H^{\gamma_j} \right),
\end{equation}
where $\varepsilon_e = \min \{\det \lat X: \lat X \subseteq \lat L, \rk \lat X = e\}$ and likewise for $\varepsilon_d$, $a$ is the same as in \eqref{eq:l=2thu}, the implicit constant depends on $n$ only, $E_{n,e,d}$ is an index set of cardinality at most $3n^{3n+1}$, and $b_j(\lat L)$ and $\gamma_j$ are as in the statement of Theorem \ref{thm:main}, except that $\sum_i i\beta_i = dn\gamma_j$.

Furthermore, the largest $\gamma_j$ is $1$, and the next largest is either
\begin{equation*}
1-\frac{b(n,d)}{n},\  1-\frac{b(d,e)(n-e)}{nd},\  \mathrm{or}\  1-\frac{1}{n}\left(1-\frac{2b(d,e)}{d} + \frac{1-b(d,e)}{n-e}\right);
\end{equation*}
if $d \leq n/2$, it is always $1-b(n,d)/n = 1-1/dn$.

\end{corollary}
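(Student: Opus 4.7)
The plan is to fibrate the flag count by its rank-$d$ component $S_d$, applying Corollary~\ref{cor:2} fiberwise and summing against the distribution of $S_d$ afforded by Theorem~\ref{thm:main}.

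For each primitive rank-$d$ sublattice $S_d\subseteq L$, set $H'_{S_d}:=(H/(\det S_d)^{n-e})^{1/d}$, so that the constraint $(\det S_e)^d(\det S_d)^{n-e}\le H$ reads $\det S_e\le H'_{S_d}$. Since $\rk S_d^{(|-e)}=d-e\le n-e$, Corollary~\ref{cor:2} applied to the ambient lattice $S_d$ with forbidden sublattice $S_d^{(|-e)}$ yields
\begin{equation*}
\#\{S_e\}=a(d,e)\frac{H}{(\det S_d)^n}+O\!\left(\sum_{\gamma<d}b_\gamma(S_d)(H'_{S_d})^\gamma\right),
\end{equation*}
the main term following from $(H'_{S_d})^d=H/(\det S_d)^{n-e}$. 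The admissible range for $\det S_d$ is $[\varepsilon_d,T_{\max}]$ with $T_{\max}:=(H/\varepsilon_e^d)^{1/(n-e)}$, since $\det S_e\ge\varepsilon_e$ forces $(\det S_d)^{n-e}\le H/\varepsilon_e^d$.

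Using the distribution $N_d(T)=a(n,d)T^n/(\det L)^d+O(\sum b_\gamma(L)T^\gamma)$ from Theorem~\ref{thm:main} and performing Abel summation against the weight $1/T^n$, the main contribution to $\sum_{S_d} 1/(\det S_d)^n$ is of the shape $\frac{a(n,d)n}{(\det L)^d}\log(T_{\max}/\varepsilon_d)$. Multiplication by $a(d,e)H$ and the identity $\log(T_{\max}/\varepsilon_d)=\frac{1}{n-e}\log(H/\varepsilon_e^d\varepsilon_d^{n-e})$ produce the stated main term with $a=a(n,d)a(d,e)n/(n-e)$.

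The heart of the proof is the error analysis, which draws from three essentially different sources: (i) the boundary term at $T=T_{\max}$ of the outer Abel summation, activated by Theorem~\ref{thm:main}'s leading error of order $b_\gamma(L)T^{n-b(n,d)}$; (ii) the inner error $b_\gamma(S_d)(H'_{S_d})^{d-b(d,e)}$ summed against $dN_d$; and (iii) a mixed source where both the outer and inner estimates are in their error regime simultaneously. To make sense of (ii)--(iii) uniformly in $S_d$, I plan to use the comparison $\lambda_i(S_d)\ge\lambda_i(L)$ (valid for $i\le d$) to replace the intrinsic $b_\gamma(S_d)$'s by products of reciprocal successive minima of $L$. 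Tracking exponents through the substitutions $H'_{S_d}=(H/T^{n-e})^{1/d}$ and $T_{\max}=(H/\varepsilon_e^d)^{1/(n-e)}$ yields the three candidate secondary exponents in the statement, and the case analysis $d\le n/2$ (which pins $b(n,d)=1/d$) identifies source (i) as dominant with the clean value $1-1/(dn)$.

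The chief obstacle will be the edge regime $\det S_d\uparrow T_{\max}$, where $H'_{S_d}\downarrow\varepsilon_e$ and the asymptotic from Corollary~\ref{cor:2} degenerates (the ratio of its error to its main term is no longer small). I would handle this by splitting the outer sum at an intermediate scale $T_0<T_{\max}$, applying Corollary~\ref{cor:2} on $[\varepsilon_d,T_0]$ and a crude a~priori count on $[T_0,T_{\max}]$, then optimizing $T_0$; controlling this balance cleanly, together with the uniform bounds on the intrinsic $b_\gamma(S_d)$'s across a possibly very skew family of sublattices, is the main technical challenge.
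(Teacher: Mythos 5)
Your fibration by $S_d$ and the main-term computation (Abel/Riemann--Stieltjes against the distribution from Theorem~\ref{thm:main}, producing the logarithmic leading term with $a = a(n,d)a(d,e)\,n/(n-e)$) coincide with what the paper does in Section~7.3. Also, applying Corollary~\ref{cor:2} to the fiber is equivalent to the paper's direct use of Theorem~\ref{thm:thunder}, since avoiding $S_d^{(|-e)}$ is exactly Thunder's count. The genuine difference --- and the gap --- is in the error analysis.

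Your plan to replace the intrinsic $b_\gamma(S_d)$'s by $L$-dependent quantities via $\lambda_i(S_d)\ge\lambda_i(L)$ discards precisely the information that makes the error sum converge. Take the leading inner error, which in explicit form is $(H'_{S_d})^{d-b}\big/\big((\det S_d)^{e-b}(\det S_d^{(|-e)})^{b}\big)$ with $b=b(d,e)$. Bounding $\det S_d^{(|-e)}$ below by a quantity depending only on $L$ and then summing $1/(\det S_d)^{\,n-b(n+d-e)/d}$ over $\det S_d\le T_{\max}=(H/\varepsilon_e^d)^{1/(n-e)}$ gives, via the $T^n$ distribution, a contribution of order
\begin{equation*}
H^{1-b/d}\,T_{\max}^{\,b(n+d-e)/d}\ \sim\ H^{\,1+b/(n-e)},
\end{equation*}
which exceeds the main term. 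So the naive uniform bound on $b_\gamma(S_d)$ is not merely lossy at the margin --- it destroys the estimate. Your proposed repair, splitting at an intermediate scale $T_0$ and using a crude count on $[T_0,T_{\max}]$, does not rescue it: any bound on the tail that is not sensitive to the inner asymptotic contributes $\gg H\log(T_{\max}/T_0)$, which is a positive fraction of $H\log H$ unless $T_0$ is within a $(1+o(1))$-factor of $T_{\max}$, at which point the blow-up on $[\varepsilon_d,T_0]$ returns.

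What is actually needed, and what the paper does, is to keep the $S_d$-geometry inside the error term and stratify the outer sum by how skew $S_d$ is. Concretely, the paper fixes a parameter $d'$ (the largest index with $\lambda_{d'}(W)\lesssim H^{1/nd}$ and a big gap to $\lambda_{d'+1}$), converts $\det W^{(|-e)}$ into $\det W^{(d')}$ times a controlled power of $H$, and then sums via Lemma~\ref{lemma:error1} and Lemma~\ref{lemma:r-s}. It also uses the sharper height constraint \eqref{eq:flagH}, i.e.\ $(\det W)^{n-e}(\det W^{(e)})^d\le H$, rather than the looser $\det S_d\le T_{\max}$, to truncate the inner integral in terms of $\det W^{(d')}$. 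This stratification is exactly what separates the three secondary exponents in the statement according to the sign of $b(n+d-e)/d-d'$, and it is the step your proposal is missing. Without retaining the $\det S_d^{(|-e)}$ (equivalently $\det W^{(d')}$) dependence through the summation, the three candidate exponents cannot be extracted.
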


In order to keep the proof relatively short and simple, we had to keep some of the assumptions made by Theorem \ref{cor:3thu}. Still, it has a couple of new implications that may be of interest. First, it shows that there must exist a gap between Theorem \ref{cor:3thu} and an ideal counting formula that would count all the rational flags, and that it must be at least of size
\begin{equation*}
O\left(H\log \frac{(\det \lat L)^d)}{\varepsilon_e^d\varepsilon_d^{n-e}}\right).
\end{equation*}
For a heavily skewed $\lat L$, for instance if $\det \lat L = 1$ but $\varepsilon_d \ll H^{-1/n}$, then this is of comparable size to the main term.



The second implication has to do with the error term in the well-known theorem of Franke, Manin, and Tschinkel (\cite{FMT}) on the number of rational points on flag varieties. In the corollary to Theorem 5 therein, which says that the number of rational points on a flag variety $V$ of (``untwisted'') height bounded by $H$ is
\begin{equation*}
Hp(\log H) + o(H),
\end{equation*}
where $p$ is a polynomial of degree $\rk \mathrm{Pic}(V) - 1$, they conjecture that the error term is of size $O(H^{1-\varepsilon})$ with $\varepsilon = (\dim V)^{-1}$. On the other hand, when $V$ is a Grassmannian, the literature (for instance Schmidt \cite{Sch}, and Thunder \cite{Thu1} \cite{Thu2}) suggests that $\varepsilon = b(n,d)/n$, as their analyses seem fairly sharp. Our Corollary \ref{cor:3} extends this to flag varieties of type $(e,d)$, suggesting that we have $\varepsilon = b(n,d)/n$ again, at least when $d \leq n/2$. In case $e \geq n/2$, one may be able to estimate $\varepsilon = 1/(n-e)n$ by duality. But in general the nature of $\varepsilon$ appears rather complicated.

It is possible to prove by a similar argument an analogue of Corollary \ref{cor:3} for a flag variety of any type that is strong enough to yield these same implications. On the other hand, we expect the ideal formula that would count all points on a flag variety to require another substantial amount of effort along the lines of the present paper. As stated in the remark after Theorem \ref{thm:main}, if we could find explicit expressions for every $b_j$ in \eqref{eq:main}, preferably containing large powers of $\det \lat L$, it would allow our error-bounding techniques to apply immediately. Using the methods of the present paper, it may be possible to achieve this for the first few small values of $d$.

\subsubsection{Mean value theorems over lattices}

Mean value theorems over random lattices provide a method of averaging lattice-point counting formulas over the space $X_n := \SL(n,\Z) \backslash \SL(n, \R)$ of determinant $1$ full-rank lattices in $\R^n$. The first known such theorem is the famous Siegel integration formula:
\begin{theorem}[Siegel \cite{Sie}, Theorem on p.341] \label{thm:sie}
Let $f: \R^n \rightarrow \R$ be a Borel measurable and compactly supported function. Then
\begin{equation} \label{eq:sie}
\int_{X_n} \sum_{v \in \lat L \backslash \{0\}} f(v) d\mu_n(\lat L) = \int_{\R^n} f(x) dx,
\end{equation}
where $d\mu_n$ is the normalized Haar measure on $\SL(n,\R)$, and $dx$ is the Lebesgue measure.
\end{theorem}

For example, if one sets $f$ to be the characteristic function of a set $S \subseteq \R^n$, then the sum on the left-hand side of \eqref{eq:sie} equals $\left| S \cap \lat L \backslash \{0\} \right|$, and thus Theorem \ref{thm:sie} implies that a random lattice sampled according to $d\mu_n$ has on average $\mathrm{vol}(S)$ nonzero vectors contained in $S$. Another important example is the Rogers integral formula, one of the main tools in geometry of numbers (see e.g. \cite{Kim}, \cite{SW}, \cite{SS} for some of the applications):
\begin{theorem}[Rogers \cite{Rogers}, (essentially) Theorem 4] \label{thm:rogers}
For $k < n$, let $f: (\R^n)^k \rightarrow \R$ be a Borel measurable and compactly supported function. Then
\begin{equation*}
\int_{X_n} \sum_{v_1, \ldots, v_k \in \lat L \atop \mathrm{independent}} f(v_1, \ldots, v_k) d\mu_n(\lat L) = \int_{\R^n} \ldots \int_{\R^n} f(x_1, \ldots, x_k) dx_1\ldots dx_k,
\end{equation*}
where $d\mu_n$  is the normalized Haar measure on $\SL(n,\R)$, and each $dx_i$ is the Lebesgue measure.
\end{theorem}

In the author's work \cite{Kim2}, written concurrently with the present paper, the machinery that turns a lattice-point counting formula, such as Theorems \ref{thm:main} and \ref{cor:2}, into a mean-value theorem has been developed, inspired by the argument of Rogers (\cite{Rogers}).
As a result, the following extension of Theorem \ref{thm:rogers} to Grassmannians is obtained from Theorem \ref{cor:2}
\begin{theorem}[Kim \cite{Kim2}, Theorem 3] \label{thm:main_mean}
Suppose $1 \leq k < n$, $1 \leq d_1, \ldots, d_k < n$ and $d_1 + \ldots + d_k < n$. Define $P(\lat L, d_1, \ldots, d_k, H_1, \ldots, H_k)$ to be the number of independent primitive sublattices $\lat A_1, \ldots, \lat A_k$ of $\lat L$ of ranks $d_1, \ldots, d_k$ and determinants bounded by $H_1, \ldots, H_k$, respectively. Then
\begin{equation*}
\int_{X_n} P(\lat L, d_1, \ldots, d_k, H_1, \ldots, H_k) d\mu_n(\lat L) = \prod_{i=1}^k a(n,d_i)H_i^n.
\end{equation*}
\end{theorem}

We note that Thunder (\cite{Thu3}) proved the $k=1$ case of this result. Theorem \ref{thm:main_mean} has several implications on the statistics of the randomized heights of the points on Grassmannians and flag varieties, such as the following.

\begin{corollary}[Kim \cite{Kim2}, Corollary to Theorem 5] \label{cor:meanflag}
Let $k \geq 2$, and $1 = d_0 \leq d_1 < \ldots < d_k < d_{k+1} = n$. For a lattice $\lat L \subseteq \R^n$ and its flag $\lat S^{d_1} \subseteq \ldots \subseteq S^{d_k} \subseteq \lat L$ of type $\mathfrak{d} = (d_1, \ldots, d_k)$, its height is defined as the quantity
\begin{equation*}
\prod_{i=1}^k (\det \lat S^i)^{d_{i+1} - d_{i-1}}.
\end{equation*}
Let $P(\lat L, \mathfrak{d}, H)$ be the number of type $\mathfrak d$ flags of $\lat L$ whose heights are bounded by $H$. Then
\begin{equation*}
\int_{X_n} P(\lat L, \mathfrak{d}, H) d\mu_n(\lat L) = \infty.
\end{equation*}
\end{corollary}

It may be interesting to compare this result with Theorem \ref{cor:3thu} and Corollary \ref{cor:3}. The author speculates that the divergence here is related to the main term of \eqref{eq:l=2me} in the statement of Corollary \ref{cor:3} being dependent on the skewness of the lattice in question.

\subsection{Method of proof}

All previous works on this topic (\cite{Sch}, \cite{Thu1}, \cite{Thu2}) count ``upwards,'' i.e. they construct the $d$-dimensional sublattice from either a $(d-1)$-dimensional sublattice or a $d$-dimensional sublattice lying in an $(n-1)$-dimensional ambient space. Our main idea is to take the dual approach, and count ``downwards'' instead: we project all the $d$-dimensional sublattices to a hyperplane, and count the cardinality of each fiber. This lets us bypass some of the technical difficulties that arise when counting upwards.

To elaborate, we prove Theorem \ref{thm:main} by the following inductive procedure that resembles the Pascal's triangle method of computing the binomial coefficients. In case $d=1$ or $d=n-1$, the formulas are well-known. Otherwise, let $\bar{\lat L}$ be the projection of $\lat L$ onto the orthogonal complement of a shortest nonzero vector of $\lat L$. Then we have
\begin{equation} \label{eq:outline}
P(\lat L, d, H) = P(\bar{\lat L}, d-1, \frac{H}{\lambda_1(\lat L)}) + \Phi(P(\bar{\lat L}, d, H)),
\end{equation}
where $\Phi$ can be regarded as a certain integral transformation. For a choice of a basis $\{v_1, \ldots, v_n\}$ of $\lat L$ and a sublattice $\lat B \subseteq \lat L$ of rank $d$, let us say $\lat B$ is of d-type $(\alpha_1, \ldots, \alpha_n)$ --- ``d'' stands for ``dual'' --- if the projection of $\lat B$ onto $\spn_\R(v_1, \ldots, v_{n-i+1})^\perp$ has rank $\alpha_i$. Then the first term on the right-hand side of \eqref{eq:outline} is counting the sublattices of d-types $(*, \ldots, *, d-1, d)$, and the second term is counting those of d-types $(*, \ldots, *, d, d)$.

In comparison, Theorem \ref{thm:thunder} counts precisely the sublattices of d-type $(1, 2, \ldots, d, \ldots, d)$. The upward counting method forces one to count the sublattices of each d-type separately, which is precisely what Thunder refers to as being ``cumbersome.'' The downward method resolves this difficulty.

Most of this paper is devoted to explicitly writing out and estimating $\Phi(P(\bar{\lat L}, d, H))$. Many parts of the computation can be done by slightly refining the methods of Schmidt (\cite{Sch}) or Thunder (\cite{Thu2}). However, the fact that $\lat L$ can be arbitrarily skewed presents a new difficulty, especially when bounding the error terms. This is resolved by comparing the gaps between the successive minima to $H^{1/d}$: if $\lambda_{i+1} - \lambda_i \ll H^{1/d}$ for all $i=1, \ldots, n-1$, the lattice may be considered not so severely skewed, as the classical techniques continue to apply. If in contrast $\lambda_{i+1} - \lambda_i \gg H^{1/d}$ for some $i$, we exploit this gap to finesse the desired error bound.

\subsection{Organization}

In Section 2, we introduce the definitions and notations used throughout the paper, and state the known formulas for $P(\lat L, 1, H)$ and $P(\lat L, n-1, H)$. In Section 3, we set up the induction argument, establishing the precise version of \eqref{eq:outline}. Sections 4 and 5 are devoted to the main and error term estimates, respectively. Section 6 collects all the computations and concludes the proof of Theorem \ref{thm:main}. The variants are all proved in Section 7.

\subsection{Acknowledgment}

Part of this work is supported by NSF grant CNS-2034176. The author thanks the anonymous referee, Lillian Pierce, Anders S\"odergren, and Jeffery Thunder for helpful comments and suggestions.

\section{Some backgrounds}

\subsection{Definitions, notations, and conventions}

Unless mentioned otherwise, the definitions and notations of this section apply.

\subsubsection*{Generalities}
The lowercase letter $p$ denotes a prime. Let us write $\Gamma = \GL(d, \Z)$ for short.
 We use capital letters such as $L, M$ to refer to matrices, and calligraphic fonts such as $\lat L, \lat M$ to denote lattices.  $n \in \Z_{>0}$ and $d \in \{1, \ldots, n-1\}$ are fixed integers throughout the paper.
 
 As in the statement of Theorem \ref{thm:thunder}, $\lambda_i(\lat L)$ is the $i$-th successive minimum of $\lat L$, and $\lat L_{i}$ denotes (a choice of) a primitive $i$-dimensional sublattice containing $v_1, \ldots, v_i \in \lat L$, which are linearly independent with $\|v_{i}\| = \lambda_i(\lat L)$. 
The $(i, j)$-entry of a matrix is denoted by the lowercase letter of the name of the matrix indexed by $ij$. For example, if $A$ is a $d \times n$ matrix, then $A = (a_{ij})_{1 \leq i \leq d \atop 1 \leq j \leq n}$. Similarly, if $x \in \R^n$, then the $i$-th entry of $x$ is denoted by $x_i$.

Later, given a $d \times (n-1)$ matrix $A$ and a $d \times 1$ vector $v$, we will need to consider the $d \times n$ matrix $B$ whose $i$-th row equals $(a_{i1}, \ldots, a_{i,n-1}, v_i)$. We write $B = (A ; v)$ to describe such a matrix.

For two quantities $f$ and $g$, $f \ll g$ means $f < Cg$, where $C$ is a positive constant possibly depending on $d$ and $n$ but no other variables. $f \sim g$ means $f \ll g$ and $g \ll f$. For example, Minkowski's second theorem says that $\det \lat L \sim \prod \lambda_i(\lat L)$.

For two matrices $A$ and $B$ with $d$ rows, $A \sim B$ means they differ by the left multiplication by an element of $\Gamma$. If the rows of each of $A$ and $B$ respectively span $\lat A$ and $\lat B$ in $\Gr(\lat M, d)$ (whose precise definition is given below), $A \sim B$ means that $\lat A = \lat B$.

Later in the paper, we will need a few facts from reduction theory. Let $\{v_1, \ldots, v_m\}$ be a basis of a lattice $\lat M$, and $\{v_1^*, \ldots, v_m^*\}$ be its Gram-Schmidt orthogonalization, that is, each $v_i^*$ is the projection of $v_i$ to the orthogonal complement of $\spn(v_1, \ldots, v_{i-1})$. Let us say the basis is \emph{reduced} if each $v_i \sim \lambda_i(\lat M)$ and $\langle v_i^*, v_j \rangle / \|v_i^*\|^2 \leq 1/2$ for all $i < j$. It is known by reduction theory (see e.g. \cite[Chapter 1]{Borel}) that any lattice has a reduced basis. Moreover, the LLL algorithm (\cite{LLL}) outputs a reduced basis of any lattice, given any basis of that lattice.

\subsubsection*{$\Gr(\lat M, d)$ and the determinant/height}
A $d \times n$ integral matrix $X \in \mathrm{Mat}_{d \times n}(\Z)$ is said to be \emph{primitive} if $X$ can be completed to an element of $\GL(n,\Z)$. When $d = 1$, this agrees with the standard definition of a primitive vector. We denote the set of all primitive $d \times n$ matrices by $\mathrm{Mat}^{pr}_{d \times n}(\Z)$.

For a lattice $\lat M \subseteq \R^n$ of rank $m \leq n$, a sublattice $\lat A \subseteq \lat M$ is said to be \emph{primitive} if $\spn_\R(\lat A) \cap \lat M = \lat A$. We denote $\Gr(\lat M, d)$ for the set of all rank $d$ primitive sublattices of $\lat M$ inside $\R^n$. Choose a basis $\{v_1, \ldots, v_m\}$ of $\lat M$, and a basis $\{w_1, \ldots, w_d\}$ of $\lat A \in \Gr(\lat M, d)$. Let $M$ and $A$ respectively denote the $m \times n$ and $d \times m$ matrices, such that the $i$-th row of $M$ is $v_i$, and the $i$-th row of $AM$ is $w_i$. One checks that $A \in \mathrm{Mat}^{pr}_{d \times m}(\Z)$ by the fact that $\lat A \in \Gr(\lat M, d)$.

Suppose one chooses a different basis $\{w'_1, \ldots, w'_d\}$ of $\lat A$, and let $A' \in \mathrm{Mat}^{pr}_{d \times m}(\Z)$ be the matrix such that the $i$-th row of $A'M$ is $w'_i$. Then $A = gA'$ for some $g \in \Gamma$. Conversely, if $A = gA'$ for some $g \in \Gamma$ and $A, A' \in \mathrm{Mat}^{pr}_{d \times m}(\Z)$, then the rows of $AM$ and $A'M$ span the same element of $\Gr(\lat M, d)$. Therefore, with a choice of $M \in \mathrm{Mat}_{m \times n}(\R)$ whose rows span $\lat M$ over $\Z$, there exists a bijection between $\Gr(\lat M, d)$ and the set of orbits $\Gamma \backslash (\mathrm{Mat}^{pr}_{d \times m}(\Z) \cdot M)$ of the action of $\Gamma$ on $\mathrm{Mat}^{pr}_{d \times m}(\Z) \cdot M$ by left multiplication.

To make this even more explicit, recall that each element of $\Gamma \backslash \mathrm{Mat}^{pr}_{d \times m}(\Z)$ is uniquely represented by a primitive Hermite normal form over $\Z$. Thus there is also a bijection between $\Gr(\lat M, d)$ and the set of all elements of form $AM$ where $A$ is a $d \times m$ primitive Hermite normal form over $\Z$. Whenever convenient, we will use these identifications of $\Gr(\lat M, d)$ interchangeably throughout the paper.

In order to simplify some notations, we adopt the unusual convention that all determinants are nonnegative (the groups $\GL(n, \cdot)$ and $\SL(n, \cdot)$ maintain their usual meanings, though). Specifically, for a square matrix $X$, we write $\det X$ for the absolute value of its usual definition of determinant. For a non-square matrix $X$, we define $\det X = \sqrt{\det XX^{\tr}}$. For a lattice $\lat A \subseteq \R^n$, we define $\det \lat A$ to be its covolume within its $\R$-span. For $\lat A \in \Gr(\lat M, d)$, note that $\det \lat A = \det AM$ holds, where $M$ is any choice of a matrix whose row vectors form a basis of $\lat M$, and $A$ is any choice of an element of $\mathrm{Mat}^{pr}_{d \times m}(\Z)$ such that the row vectors of $AM$ form a basis of $\lat A$.

For a matrix $M$, we define
\begin{equation*}
f_H(M) = \begin{cases} 1 &\mbox{if $\det M \leq H$} \\ 0 &\mbox{otherwise,} \end{cases}
\end{equation*}
and similarly for a lattice $\lat M$.

It is easy to see that, for any compactly supported function $F$ defined on a subset of $\R$, we have
\begin{equation*}
\sum_{\lat A \in \Gr(\lat M, d)} F(\det \lat A) = \sum_{\lat B \in \Gr(\Z^m, d)} F(\det \lat B M) = \sum_{B \in \Gamma \backslash \mathrm{Mat}^{pr}_{d \times m}(\Z)} F(\det BM),
\end{equation*}
where $\lat B M$ is understood as the image of $\lat B$ by the linear map $\R^n \rightarrow \R^n$ induced by the matrix $M$. Again we will switch freely between these notations as we see fit.

\subsubsection*{Orthogonality notions}
Following Schmidt (\cite{Sch}), we define the \emph{polar lattice} $\lat M^P$ of $\lat M$ by $\lat M^P = \{w \in \spn_\R(\lat M) :  \langle v, w \rangle \in \Z,  \forall v \in \lat M\}$. If $\lat S \in \Gr(\lat M, d)$, we define its \emph{orthogonal lattice} $\lat S^\perp \in \Gr(\lat M^P, m-d)$ by $\lat S^\perp = \{w \in \lat M^P : \langle v, w \rangle = 0, \forall v \in \lat S\}$.

In addition, for a $m \times n$ matrix $M$ whose $i$-th row vector is denoted by $v_i$, we define its \emph{polar matrix} $M^P$ as the $m \times n$ matrix whose $j$-th row vector $v^P_j$ lies in $\spn_\R(v_1, \ldots, v_m)$ and satisfies $\langle v_i, v^P_j \rangle = \delta_{ij}$. Then the rows of $M^P$ generate the polar lattice of the lattice generated by the rows of $M$.

\subsection{Base cases}

In case $d = 1$, Theorem \ref{thm:main} is precisely Theorem 4 in \cite{Thu2} (also Lemma 2 of \cite{Sch}), which states that
\begin{equation} \label{eq:case1}
P(\lat L, 1, H) = a(n,1) \frac{H^n}{\det \lat L} + O\left(\sum_{i=1}^n \frac{H^{n-i}}{\det \lat L_{n-i}}\right).
\end{equation}

Below in Lemma \ref{lemma:case1}, we present an extension of \eqref{eq:case1} to an affine lattice, which we will need later.

In case $d = n - 1$, we apply the \emph{duality theorem} (see Section 2 of \cite{Thu2}) to \eqref{eq:case1}, which states that, for a sublattice $\lat S \subseteq \lat L$ and its orthogonal lattice $\lat S^\perp \subseteq \lat L^P$,
\begin{equation*}
\det \lat S^\perp = \frac{\det \lat S}{\det \lat L}
\end{equation*}
holds, and thus
\begin{equation} \label{eq:duality}
P(\lat L, d, H) = P(\lat L^P, n-d,\frac{H}{\det \lat L}).
\end{equation}

Therefore \eqref{eq:case1} implies
\begin{equation*}
P(\lat L, n-1, H) = a(n, n-1) \frac{H^n}{(\det \lat L)^n \det \lat L^P} + O\left(\sum_{i=1}^n\frac{H^{n-i}}{(\det \lat L)^{n-i} \det (\lat L^P)_{n-i}}\right).
\end{equation*}

By the well-known facts that $\det \lat L \cdot \det \lat L^P = 1$ and $\lambda_i(\lat L) \lambda_{n-i+1}(\lat L^P) \geq 1$
(in fact, $\lambda_i(\lat L) \lambda_{n-i+1}(\lat L^P) \sim 1$, by \cite[Theorem 2.1]{Ban}),
\begin{equation} \label{eq:duality2}
\det (\lat L^P)_{n-i} \gg \det \lat L_{i} / \det \lat L,
\end{equation}
so we can rewrite the above as
\begin{equation*}
P(\lat L, n-1, H) = a(n, n-1) \cdot \frac{H^n}{(\det \lat L)^{n-1}} + O\left(\sum_{i=1}^n \frac{H^{n-i}}{\det \lat L_i \cdot (\det \lat L)^{n-1-i}}\right).
\end{equation*}


\section{Division into two parts}

\subsection{Preliminaries}
Until the end of Section 6, we fix $n \geq 4$ and $2 \leq d \leq n-2$. We will divide $P(\lat L, d, H)$ into two parts, and deal with them one at a time. We induct on $n$, assuming that $P$ has been computed for all lattices of rank $< n$.

Throughout the rest of the paper, we fix a basis $\{v_1, \ldots, v_n\}$ of $\lat L$, and denote by $L$ the $n \times n$ matrix whose $i$-th row is $v_i$. Define $\bar{\lat L} = \lat L /\langle v_n \rangle$, and identify it with the projection of $\lat L$ onto the subspace of $\R^n$ orthogonal to $v_n$ i.e. we think of $\bar{\lat L}$ as a subset of $\R^n$. Let $\bar{v}_i$ be the component of $v_i$ orthogonal to $v_n$, so that $v_i = \bar{v}_i + a_iv_n$ for some $a_i \in \R$ and $\bar{\lat L} = \spn_\Z(\bar{v}_1, \ldots, \bar{v}_{n-1})$. Also let $\bar L$ be the $(n-1) \times n$ matrix whose $i$-th row is $\bar v_i$.

We write
\begin{equation*}
P(\lat L, d, H) = P^1(\lat L, d, H) + P^2(\lat L, d, H),
\end{equation*}
where $P^1(\lat L, d, H)$ equals the number of rank $d$ primitive sublattices of $\lat L$ of determinant $\leq H$ such that its projection to $\bar{\lat L}$ is also of rank $d$, and $P^2(\lat L, d, H)$ equals the number of those whose projection is of rank $d-1$. Equivalently, $P^1$ counts primitive sublattices whose $\R$-span does not contain $v_n$, and $P^2$ counts those that does.

As discussed in Section 1.4 above, we may identify $\lat A \in \Gr(\lat L, d)$ with an orbit $\Gamma M L$ of the left multiplication of $\Gamma$ on $\mathrm{Mat}^{pr}_{d \times n}(\Z) \cdot L$, for some $M = (c_{ij})_{1 \leq i \leq d \atop 1 \leq j \leq n} \in \mathrm{Mat}^{pr}_{d \times n}(\Z)$. Also, let $\tilde{L}$ be the $n \times n$ matrix whose $i$-th row vector equals $\bar{v}_i$ for $1 \leq i \leq n-1$, and $v_n$ for $i = n$, so that
\begin{equation*}
L = \begin{pmatrix}
1 &    &           &    & a_1 \\
   & 1  &          &    & a_2 \\
   &    & \ddots &     & \vdots \\
   &    &           & 1 & a_{n-1} \\
   &    &           &    & 1
\end{pmatrix}\tilde{L}.
\end{equation*}

Then we can also write $\lat A$ in the form $\Gamma (C; c + c') \tilde{L}$, where $C = (c_{ij})_{1 \leq i \leq d \atop 1 \leq j \leq n-1}$ is the first $d \times (n-1)$ submatrix of $M$, and $c = (c_{1n}, \ldots, c_{dn})^{\mathrm{tr}}$ and $c' = (\sum_j a_jc_{1j}, \ldots, \sum_j a_jc_{dj})^{\mathrm{tr}}$ are vectors in $\R^d$.

\subsection{Computing $P^2(\lat L, d, H)$}
Consider first the case $\rank\, C = d - 1$, so that $\lat A$ contributes to $P^2$. We may assume that $M$ is a Hermite normal form, so that $C$ is too. Because $M$ is primitive, so is $C$, and the $d$-th entry of the vectors $c$ and $c'$ must be equal to $1$ and $0$ respectively. This forces each of the other entries of $c + c'$ to have only one choice modulo the left action of $\Gamma$. Thus
\begin{equation} \label{eq:cau-bin}
P^2(\lat L, d, H) = P(\bar{\lat L}, d-1, \frac{H}{\|v_n\|}),
\end{equation}
to which we can simply apply Theorem \ref{thm:main} (see the remark after its statement). 

\subsection{Some lemmas}
Working with $P^1$ is much more involved. Most of the remainder of this paper is devoted to this task. The goal of this section is to derive the expression \eqref{eq:p1mid} for $P^1$ that is amenable to computation.

We start by recalling the standard choice of the representatives of the right cosets of $\Gamma$ in the double coset $\Gamma a \Gamma$, where $a \in \mathrm{Mat}_{d \times d}(\Z)$ has determinant $k > 0$. Such a representative, say $h = (h_{ij})_{1 \leq i, j \leq d}$, is a lower triangular matrix with determinant $k$, with the condition that $0 \leq h_{ji} < h_{ii}$ for all $j > i$. Of course, $\Gamma h \subseteq \Gamma a \Gamma$ if and only if $a$ and $h$ have the same invariant factors.

\begin{lemma}\label{lemma:triple}
Given an integral $d \times n$ matrix $(C; c)$ with $\rank\, C = d$, there exists a unique triple $(h, B, b)$, where $h$ is one of the right coset representatives described above, $B$ is a $d \times (n-1)$ primitive Hermite normal form of rank $d$, and $b \in \Z^n$, such that $(C; c) \sim (hB; b)$.
\end{lemma}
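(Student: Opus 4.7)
The plan is to produce $(h, B, b)$ by a three-stage factorization: first isolate the primitive HNF $B$, then pull out a square factor $M$ so that $C = MB$, then normalize $M$ to the canonical lower-triangular form $h$ by a unique left $\Gamma$-action, which simultaneously transports $c$ to $b$.

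First I would take $\Lambda$ to be the $\Z$-row lattice of $C$ inside $\Z^{n-1}$; since $\rank C = d$, it has rank $d$, and its primitive hull $\Lambda^{\mathrm{pr}} \subseteq \Z^{n-1}$ (the saturation) has a unique basis in Hermite normal form, which I declare to be $B$. By construction the rows of $C$ lie in $\Lambda^{\mathrm{pr}}$, so there is a unique $M \in \mathrm{Mat}_{d \times d}(\Z)$ with $C = MB$; the rank hypothesis gives $|\det M| = [\Lambda^{\mathrm{pr}} : \Lambda] \neq 0$, and after flipping the sign of one row of $B$ if necessary (which preserves the HNF shape under the usual sign convention for diagonal entries) we may arrange $k := \det M > 0$. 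Next, I invoke the standard square-matrix Hermite normal form: any $M \in \mathrm{Mat}_{d \times d}(\Z)$ with positive determinant admits a unique decomposition $M = \gamma^{-1} h$ with $\gamma \in \Gamma$ and $h$ lower triangular satisfying $h_{ii} > 0$ and $0 \leq h_{ji} < h_{ii}$ for $j > i$. This $h$ is automatically the canonical representative of the right coset $\Gamma M \subseteq \Gamma a \Gamma$, where $a$ carries the invariant factors of $M$. Setting $b := \gamma c$ completes existence, since $\gamma(C; c) = (\gamma M B; \gamma c) = (hB; b)$.

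For uniqueness, suppose $(h_1 B_1; b_1)$ and $(h_2 B_2; b_2)$ both represent $(C; c)$, so $\gamma'(h_1 B_1; b_1) = (h_2 B_2; b_2)$ for some $\gamma' \in \Gamma$. The $\Gamma$-invariance of row spans shows that $h_1 B_1$ and $h_2 B_2$ span the same sublattice of $\Z^{n-1}$, hence have the same primitive hull; uniqueness of the primitive HNF gives $B_1 = B_2$. Right-cancelling $B$ yields $\gamma' h_1 = h_2$, and uniqueness of the canonical lower-triangular form within a right $\Gamma$-coset forces $h_1 = h_2$ and $\gamma' = I$, whence $b_1 = b_2$.

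The genuine content is packaged in two classical uniqueness facts: uniqueness of the HNF of a primitive sublattice of $\Z^{n-1}$, and uniqueness of the canonical lower-triangular coset representative described in the paragraph preceding the lemma. I therefore anticipate no serious obstacle, only the bookkeeping of applying each uniqueness statement in the right order; the one minor care is the sign choice to secure $k > 0$, which is routine.
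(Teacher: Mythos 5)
Your proof is correct and takes essentially the same route as the paper's: extract the Hermite normal form basis $B$ of the saturation of the row lattice of $C$, then the canonical coset representative $h$ from the square cofactor, carrying $c$ along to $b$ (the paper phrases the first step via the Smith normal form of $C$, but the resulting triple $(h,B,b)$ is identical). One minor remark: the sign-flip step is both unnecessary --- since $\Gamma = \GL(d,\Z)$ contains determinant $-1$ elements, the HNF factorization $M = \gamma^{-1}h$ already produces $h$ with positive diagonal for $\det M$ of either sign --- and, as written, incorrect, because negating a row of $B$ would make a pivot entry negative and thereby break the HNF normalization.
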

\begin{proof}
By the theory of the Smith normal form, we have $(C; c) \sim (aB_0; b_0)$ where $a$ is an invariant factor matrix --- that is, $a = \mathrm{diag}(a_1, \ldots, a_d)$ with $a_i | a_{i+1}$ --- $B_0$ is a primitive $d \times (n-1)$ matrix of full rank, and $b_0 \in \Z^d$. Write $B_0 = \gamma B$, where $B$ is the Hermite normal form of $B_0$ and $\gamma \in \Gamma$. Then there exists $\gamma' \in \Gamma$ and $h$ a coset representative of $\Gamma a \Gamma$ such that $\gamma'h = a\gamma$. Therefore, writing $b = \gamma'^{-1}b_0$, we have $(C; c) \sim (hB, b)$.

Suppose we have another triple $(h', B', b')$ such that $(hB, b) \sim (h'B' , b')$. This is possible only if the row vectors of $B$ and $B'$ generate the same lattice. Since both $B$ and $B'$ are in the Hermite normal form, $B = B'$. This in turn implies $h = h'$ and $b = b'$.
\end{proof}

\begin{lemma}\label{lemma:prim}
Again given an integral $d \times n$ matrix $(C; c)$, write $C = \gamma a B$, where $\gamma \in \Gamma$, $a = \mathrm{diag}(a_1, \ldots, a_d)$ is an invariant factor matrix, and $B$ is primitive.
Thus $(C; c) \sim (aB; \gamma^{-1}c) = (aB; b)$, where $b := \gamma^{-1}c$.

Then $(aB; b)$ is primitive if and only if $a_1 = \ldots = a_{d-1} = 1$ and $b_d$ is coprime to $a_d$.
\end{lemma}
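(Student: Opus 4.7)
My plan is to reduce the primitivity of $(aB;b)$ to a divisibility statement by analyzing its $d\times d$ minors, recalling that a $d\times n$ integer matrix is primitive precisely when the gcd of its $d\times d$ minors equals $1$. Setting $\alpha := a_1\cdots a_d$, the minors split into two families: type 1 minors use $d$ columns from the block $aB$, and since $a$ is diagonal each such minor equals $\alpha\cdot(\text{minor of }B)$, so that the gcd over type 1 minors is $\alpha$ by primitivity of $B$. Type 2 minors use the last column together with $d-1$ columns $J$ from $aB$; cofactor expansion along the last column gives
\begin{equation*}
\sum_{i=1}^{d}(-1)^{i+d}\,b_i\,\frac{\alpha}{a_i}\,m_{i,J},
\end{equation*}
where $m_{i,J}$ is the $(d-1)\times(d-1)$ minor of $B$ obtained by deleting row $i$ and restricting to columns $J$. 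The goal is then to compute the gcd of the combined list.

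First I would establish the necessity of $a_1=\cdots=a_{d-1}=1$: for each $i\leq d-1$, I would argue that every $d\times d$ minor of $(aB;b)$ is divisible by $a_i$. Type 1 minors are divisible by $\alpha$, hence by $a_i$, and in each type 2 summand the factor $\alpha/a_j$ visibly contains $a_i$ when $j\neq i$; for the diagonal summand $j=i$, the factor $\alpha/a_i=\prod_{k\neq i}a_k$ contains $a_{i+1}$, which is divisible by $a_i$ by the invariant-factor condition. Handling this diagonal summand is the only mildly delicate point of the argument --- it is precisely here that I would use $i\leq d-1$ so that $a_{i+1}$ exists. Consequently $a_i$ divides the gcd of all $d\times d$ minors, so primitivity forces $a_i=1$.

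Having reduced to $a=\mathrm{diag}(1,\ldots,1,a_d)$, so that $\alpha=a_d$, with $\alpha/a_i=a_d$ for $i<d$ and $\alpha/a_d=1$, the next step is to show the gcd of all minors equals $\gcd(a_d,b_d)$. Under these relations each type 2 minor simplifies to $a_d\cdot(\text{integer})+b_d\,m_{d,J}$, so reducing everything modulo $a_d$ leaves only $\pm b_d\,m_{d,J}$, giving overall gcd $\gcd\bigl(a_d,\,b_d\cdot \gcd_J m_{d,J}\bigr)$. Finally I would note that the $m_{d,J}$ are precisely the $(d-1)\times(d-1)$ minors of the top $d-1$ rows of $B$; since $B$ completes to an element of $\GL(n-1,\Z)$, those top $d-1$ rows likewise do, so they form a primitive $(d-1)\times(n-1)$ matrix with $\gcd_J m_{d,J}=1$. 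The overall gcd is therefore $\gcd(a_d,b_d)$, yielding primitivity iff $b_d$ is coprime to $a_d$, as claimed.
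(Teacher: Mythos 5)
Your proof is correct but takes a genuinely different route from the paper's. The paper first normalizes $B$ to the standard form $(I_d \mid 0)$ by a right $\GL(n-1,\Z)$ change of coordinates (which preserves primitivity), so that the rows of $(aB;b)$ become $(a_ie_i,b_i)$; it then tests primitivity by a direct lattice argument, characterizing imprimitivity as the existence of integers $0 \le r_i < a_i$, not all zero, with $\sum_i b_i r_i/a_i \in \Z$, and finishing with a short case analysis on whether $a_{d-1}=1$. You avoid the normalization entirely and use the determinantal criterion (gcd of $d\times d$ minors equals $1$), split the minors according to whether the $b$-column is used, and cofactor-expand. Both arguments are of comparable length; yours is slightly more self-contained in that it needs no WLOG reduction, and it yields the clean intermediate fact that once $a_1=\cdots=a_{d-1}=1$ the gcd of all maximal minors is exactly $\gcd(a_d,b_d)$, while the paper's version reads more transparently as a statement about lattice membership. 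It is also worth noting that the two proofs lean on the invariant-factor divisibility $a_i \mid a_{i+1}$ at the analogous crucial spot: you use it to see that $a_i$ divides the diagonal summand of the cofactor expansion, whereas the paper uses it (via $a_{d-1}\mid a_d$) to produce a common prime divisor of $a_{d-1}$ and $a_d$.
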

\begin{proof}
Without loss of generality, we may assume $B$ to be the matrix which has $1$'s in the diagonal and $0$'s elsewhere. $(aB, b)$ is imprimitive if and only if there exist integers $0 \leq r_i < a_i$ for $i = 1, \ldots, d$, $r_i$ not all zero, such that $(r_1, \ldots, r_d, 0, \ldots, 0, \sum_i b_ir_i/a_i) \in \Z^n$, or equivalently $\sum_i b_ir_i/a_i \in \Z$.

Suppose $a_{d-1} \neq 1$. We claim that, for any $b_{d-1}$ and $b_d$, $b_{d-1}r_{d-1}/a_{d-1} + b_dr_d/a_d \in \Z$ for a nontrivial choice of the $r$'s. There exists a prime $p$ such that $p | a_{d-1}$ and $p | a_d$, so it suffices to find a nontrivial solution to the expression $b_{d-1}r_{d-1} + b_dr_{d} \equiv 0 (\mathrm{mod}\, p)$. But this is clearly possible.

Next suppose $a_{d-1} = 1$. We are led to consider the condition $b_dr_d/a_d \in \Z$. This is impossible if and only if $(b_d, a_d) = 1$, which completes the proof.
\end{proof}

\begin{lemma}\label{lemma:countsub}
Write $e(p^\alpha) = \mathrm{diag}(1, \ldots, 1, p^\alpha)$. Then the necessary and sufficient condition for $h \in \mathrm{Mat}_{d \times d}(\Z)$ to be one of the standard form right coset representatives of $\Gamma$ in $\Gamma e(p^\alpha) \Gamma$ is as follows: $h$ is a lower triangular matrix with $h_{ii} = p^{a_i}$, where $a_i \geq 0$ and $\sum a_i = \alpha$, $0 \leq h_{ji} < h_{ii}$ for $j > i$, and in addition if $i < j$ are two indices such that $a_i, a_j \geq 1$ and $a_{i+1} = \ldots = a_{j-1} = 0$ --- i.e. all diagonal entries between $h_{ii}$ and $h_{jj}$ are trivial --- then $(h_{ji}, p) = 1$.
\end{lemma}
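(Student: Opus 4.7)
The plan is to translate membership in $\Gamma e(p^\alpha)\Gamma$ into a rank condition on the reduction $\bar h := h \bmod p$, and then read off that condition from the lower triangular structure. The key observation is that $h \in \Gamma e(p^\alpha)\Gamma$ iff the Smith normal form of $h$ is $\mathrm{diag}(1, \ldots, 1, p^\alpha)$, iff the cokernel $\mathbb{Z}_p^d / h\mathbb{Z}_p^d$ is a cyclic $\mathbb{Z}_p$-module, iff $\mathrm{rank}_{\mathbb{F}_p}(\bar h) = d-1$. The conditions that $h$ be lower triangular with $h_{ii} = p^{a_i}$, $\sum a_i = \alpha$, and $0 \le h_{ji} < h_{ii}$ are already forced by $h$ being the standard Hermite representative of its coset together with $\det h = p^\alpha$, so the only content to establish is that $\mathrm{rank}(\bar h) = d-1$ corresponds to the claimed condition on consecutive nontrivial pairs.

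To that end, I would split $\{1, \ldots, d\}$ into $S = \{i : a_i = 0\}$ and $I = \{i_1 < \cdots < i_r\} = \{i : a_i \ge 1\}$. For each $i \in S$, the bound $h_{ji} < h_{ii} = 1$ forces $h_{ji} = 0$ for every $j > i$, so the $i$-th column of $h$ is $e_i$; in particular, entries $h_{ij}$ with $i \in I$, $j \in S$, $j < i$ vanish. Reordering so that the $S$-indices come before the $I$-indices, $\bar h$ acquires the block form
\[
\bar h = \begin{pmatrix} I_{d-r} & B \\ 0 & h'' \end{pmatrix},
\]
where $h''$ is the $r \times r$ matrix with entries $h_{i_k, i_l} \bmod p$. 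Since $h_{i_k, i_k} = p^{a_{i_k}} \equiv 0 \bmod p$ (as $a_{i_k} \ge 1$) and the entries above the diagonal of $h''$ vanish by lower triangularity of $h$, the matrix $h''$ is strictly lower triangular over $\mathbb{F}_p$.

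From the block form, $\mathrm{rank}(\bar h) = (d-r) + \mathrm{rank}(h'')$, and the strict lower triangularity of $h''$ gives $\mathrm{rank}(h'') \le r-1$. So $\mathrm{rank}(\bar h) = d-1$ iff $\mathrm{rank}(h'') = r-1$. At this point I would invoke the standard fact that a strictly lower triangular $r \times r$ matrix has rank exactly $r-1$ iff all its subdiagonal entries are nonzero --- the $(r-1) \times (r-1)$ minor obtained by deleting the top row and rightmost column of $h''$ is itself lower triangular with diagonal $h''_{2,1}, \ldots, h''_{r, r-1}$ and determinant $\prod_k h''_{k+1, k}$. Since $h''_{k+1, k} = h_{i_{k+1}, i_k} \bmod p$, this is exactly the condition $(h_{i_{k+1}, i_k}, p) = 1$ for every consecutive pair $i_k < i_{k+1}$ in $I$, which is the lemma's condition.

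The main obstacle, if any, is the bookkeeping that establishes the $(I, S)$-block of $\bar h$ vanishes and that the diagonal of $h''$ vanishes mod $p$; both follow immediately from $a_{i_k} \ge 1$ combined with the normal form bounds, and everything else reduces to linear algebra over $\mathbb{F}_p$.
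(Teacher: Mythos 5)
Your proposal is correct, and it takes a genuinely different route from the paper's proof. The paper proceeds by direct manipulation: it extracts the $3 \times 3$ submatrix supported on the three smallest indices with $a_i > 0$, explicitly computes when that block has invariant factors $(1,1,p^{a_i+a_j+a_k})$ via hands-on row and column operations, and then asserts that iterating this argument determines the invariant factors of $h$ in full. Your argument instead makes a single conceptual reduction up front: $h \in \Gamma e(p^\alpha)\Gamma$ iff the Smith normal form of $h$ is $e(p^\alpha)$, iff $\mathrm{rank}_{\mathbb{F}_p}(\bar h) = d-1$, and then the whole question becomes linear algebra over $\mathbb{F}_p$. The block decomposition $\bar h = \begin{pmatrix} I_{d-r} & B \\ 0 & h'' \end{pmatrix}$ with $h''$ strictly lower triangular is clean, and it makes transparent something the paper's proof leaves implicit: only the subdiagonal entries $h_{i_{k+1},i_k}$ (the entries between consecutive nontrivial diagonal positions) affect membership in the double coset, and all other sub-diagonal entries are free. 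What the paper's approach buys is that it stays entirely within the elementary setting of integral matrix manipulations; what yours buys is brevity and a sharper structural explanation. One small point worth making explicit in your write-up: the minor argument you give proves that nonvanishing of all subdiagonal entries of $h''$ forces $\mathrm{rank}(h'') \geq r-1$, which suffices for the ``if'' direction; for the ``only if'' direction you should also note that if $h''_{k+1,k} = 0$ then rows $2,\ldots,k+1$ all lie in $\spn(e_1,\ldots,e_{k-1})$ (a $(k-1)$-dimensional space) and rows $k+2,\ldots,r$ contribute at most $r-k-1$ more, giving $\mathrm{rank}(h'') \leq r-2$. That closes the biconditional cleanly.
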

\begin{proof}
Let $h$ be a coset representative of some double coset of a matrix of determinant $p^\alpha$, in the form that we chose in the beginning of this section. Then all but the last condition are automatically satisfied. For the last condition, choose the three smallest indices $i < j < k$ for which $a_i, a_j, a_k > 0$. We consider the $3 \times 3$ matrix
\begin{equation} \label{eq:lemma:countsub}
\begin{pmatrix}
p^{a_i} &  &  \\
h_{ji} & p^{a_j} & \\
h_{ki} & h_{kj} & p^{a_k}
\end{pmatrix}.
\end{equation}

We will show that this matrix has invariant factors $(1, 1, p^{a_i + a_j + a_k})$ if and only if $h_{ji}$ and $h_{kj}$ are coprime to $p$. Then the proof is complete because we can repeatedly apply this argument to $h$ to compute the invariant factors of $h$.

If $h_{ji}$ and $p$ are coprime, there exist integers $x, y$ such that $yh_{ji} - xp^{a_i} = 1$, so that the matrix
\begin{equation*}
\begin{pmatrix}
h_{ji} & p^{a_{i}} & 0 \\
x & y & 0 \\
0 & 0 & 1
\end{pmatrix}
\end{equation*}
has determinant $1$. Multiplying this on the left of \eqref{eq:lemma:countsub}, we have
\begin{equation*}
\begin{pmatrix}
0 & p^{a_i + a_j} & 0 \\
1 & yp^{a_j} & 0 \\
h_{ki} & h_{kj} & p^{a_k}
\end{pmatrix},
\end{equation*}
which, upon multiplying by suitable elements of $\Gamma$ from both sides, becomes
\begin{equation*}
\begin{pmatrix}
1 & 0 & 0 \\
0 & p^{a_i + a_j} & 0 \\
0 & h_{kj} - yp^{a_j}h_{ki} & p^{a_k}
\end{pmatrix}.
\end{equation*}

If furthermore $h_{kj}$ is coprime to $p$, then so is $h_{kj} - yp^{a_j}h_{ki}$, so we can use the same trick to see that \eqref{eq:lemma:countsub} has invariant factors $(1, 1, p^{a_i+a_j+a_k})$ indeed.

Now go back to \eqref{eq:lemma:countsub} and consider the case $h_{ji} = cp^b$; we can assume $1 \leq b < a_j$ and $(c, p) = 1$. We restrict our attention to the $2 \times 2$ upper-left corner submatrix of \eqref{eq:lemma:countsub}, and temporarily use $\approx$ to denote the equivalence under the left and right multiplication by $\Gamma$. Then, by a similar argument as earlier, for an appropriate integer $y$,
\begin{equation*}
\begin{pmatrix}
p^{a_i} & \\
cp^b & p^{a_j}
\end{pmatrix}
=
\begin{pmatrix}
p^{a_i - b} & \\
c & p^{a_j}
\end{pmatrix}
\begin{pmatrix}
p^b & \\
 & 1
\end{pmatrix}
\approx
\begin{pmatrix}
0 & p^{a_i + a_j - b} \\
1 & yp^{a_j}
\end{pmatrix}
\begin{pmatrix}
p^b & \\
 & 1
\end{pmatrix}
\approx
\begin{pmatrix}
0 & p^{a_i + a_j - b} \\
p^b & 0
\end{pmatrix},
\end{equation*}
so $p^b$ appears as one of the invariant factors.

\end{proof}

\begin{lemma}\label{lemma:count}
Write $e(k) = \mathrm{diag}(1, \ldots, 1, k)$, as in the previous lemma. Then the number of the right cosets of $\Gamma$ in $\Gamma e(k) \Gamma$ equals
\begin{equation*}
\prod_{p | k \atop p^\alpha \| k} p^{(\alpha-1)(d-1)}(1 + p + \ldots + p^{d-1}).
\end{equation*}
\end{lemma}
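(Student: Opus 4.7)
The plan is to (i) reduce to the case $k = p^\alpha$ by a multiplicativity argument, (ii) identify the right cosets $\Gamma h \subseteq \Gamma e(p^\alpha) \Gamma$ with sublattices of $\Z^d$ of index $p^\alpha$ having cyclic quotient, and (iii) count the latter directly via surjections $\Z^d \twoheadrightarrow \Z/p^\alpha\Z$. This bypasses the rather intricate case analysis of Lemma \ref{lemma:countsub} in favor of a conceptual count.

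For step (i), I would observe that, via the Chinese Remainder Theorem, a cyclic quotient of $\Z^d$ of order $k = \prod_{p \mid k} p^{\alpha_p}$ factors uniquely as a product of cyclic quotients of order $p^{\alpha_p}$, one for each prime $p \mid k$. Equivalently, the invariant factors of a matrix $h$ are $(1, \ldots, 1, k)$ if and only if, for every prime $p \mid k$, the $p$-part of its Smith normal form is $(1, \ldots, 1, p^{\alpha_p})$. Hence the coset count is multiplicative, and the problem reduces to establishing $N(p^\alpha) = p^{(\alpha - 1)(d-1)}(1 + p + \ldots + p^{d-1})$.

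For step (ii), in the row-span convention of the paper, two matrices $h, h' \in \mathrm{Mat}_{d \times d}(\Z)$ have the same row span precisely when $\Gamma h = \Gamma h'$, and the sublattice $h\Z^d \subseteq \Z^d$ has cyclic quotient of order $p^\alpha$ exactly when the invariant factors of $h$ are $(1, \ldots, 1, p^\alpha)$, that is, when $h \in \Gamma e(p^\alpha) \Gamma$. Then for step (iii), a sublattice of $\Z^d$ with cyclic quotient $\Z/p^\alpha\Z$ is the kernel of a surjection $\varphi: \Z^d \twoheadrightarrow \Z/p^\alpha\Z$, and such a $\varphi$ is determined by a tuple $(x_1, \ldots, x_d) \in (\Z/p^\alpha\Z)^d$ whose components generate $\Z/p^\alpha\Z$, i.e., at least one $x_i$ is a unit modulo $p$. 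Their count is $p^{d\alpha} - p^{d(\alpha-1)}$, obtained by subtracting the tuples lying in $p(\Z/p^\alpha\Z)^d$. Two such surjections share a kernel if and only if they differ by an automorphism of $\Z/p^\alpha\Z$, of which there are $\phi(p^\alpha) = p^{\alpha - 1}(p-1)$. Dividing gives
\[ N(p^\alpha) = \frac{p^{d\alpha} - p^{d(\alpha-1)}}{p^{\alpha - 1}(p-1)} = p^{(\alpha-1)(d-1)} \cdot \frac{p^d - 1}{p - 1}, \]
which is the claimed formula.

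The main obstacle, modest as it is, lies in step (ii), namely verifying the bijection cleanly in the row-span convention. An alternative route, staying closer to the enumeration of Lemma \ref{lemma:countsub}, is to sum the count of admissible $h$'s directly: for a tuple $(a_1, \ldots, a_d)$ of nonnegative integers summing to $\alpha$ with $m$ nonzero entries at positions $i_1 < \ldots < i_m$ with values $b_s$, Lemma \ref{lemma:countsub} yields $p^{\sum_s b_s(d - i_s)}(1 - 1/p)^{m-1}$ admissible $h$'s, and summing over all tuples recovers the same answer. However, that combinatorial manipulation — grouping by $m$, tracking the positions of the nonzero $b_s$'s, and collapsing the resulting geometric series — is noticeably more painful than the surjection-counting route above, which is why I would prefer the conceptual argument.
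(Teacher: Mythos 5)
Your proof is correct, and it takes a genuinely different route from the paper's. The paper reduces to $k=p^\alpha$ by citing Hecke theory, then inducts on $\alpha$, explicitly constructing $p^{d-1}$ Hermite-form representatives of $\Gamma e(p^\alpha)\Gamma$ from each representative of $\Gamma e(p^{\alpha-1})\Gamma$; this relies essentially on the detailed criterion of Lemma \ref{lemma:countsub} to know exactly which Hermite forms belong to which double coset. You instead identify right cosets $\Gamma h \subseteq \Gamma e(k)\Gamma$ with sublattices $\Lambda \subseteq \Z^d$ such that $\Z^d/\Lambda$ is cyclic of order $k$, and then count these as kernels of surjections $\Z^d \twoheadrightarrow \Z/k\Z$ modulo $\mathrm{Aut}(\Z/k\Z)$. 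This is cleaner and shorter; it avoids the Hermite-form bookkeeping of Lemma \ref{lemma:countsub} entirely, and in fact once you phrase it as surjection counting you do not even need step (i): the number of generating $d$-tuples of $\Z/k\Z$ and the order $\phi(k)$ of its automorphism group are both multiplicative, so the formula $\bigl(\prod_p (p^{d\alpha_p}-p^{d(\alpha_p-1)})\bigr)/\phi(k)$ falls out for general $k$ in one stroke. What the paper's approach buys is an explicit parametrization of the coset representatives, which it needs anyway for Lemmas \ref{lemma:triple}, \ref{lemma:prim}, and the sum \eqref{eq:p1rewrite}; your argument is the right one if the goal is only the count. The one cosmetic slip is writing $h\Z^d$ (column span) while invoking the paper's row-span convention --- the correct lattice is the row span $\Z^d h$ --- but this is harmless since transposition is a bijection preserving invariant factors.
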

\begin{proof}
From the general theory of Hecke operators (see Chapter 3 of Shimura \cite{Shi}), it suffices to prove the lemma for the case $k = p^\alpha$. We proceed by induction on $\alpha$.

In case $\alpha = 1$, there exist $p^{d-i}$ coset representatives which has $a_{ii} = p$ and $a_{jj} = 1$ for all $j \neq i$. This exhausts all the representatives of $\Gamma e(p) \Gamma$, so the lemma holds true in this case.

For the general case, it suffices to match, to each representative $h$ of $\Gamma e(p^{\alpha-1}) \Gamma$, $p^{d-1}$ representatives of $\Gamma e(p^{\alpha}) \Gamma$, different for each $h$. Suppose $j$ is the smallest number for which $h_{jj}$ is a power of $p$. Then modifying $h_{jj}$ to $ph_{jj}$ and $h_{kj} (k > j)$ to $h_{kj} + c_kh_{jj}$, for any choice of $0 \leq c_k < p$, yields a representative of $\Gamma e(p^{\alpha}) \Gamma$, accounting for $p^{d-j}$ out of $p^{d-1}$ total. Also, for each $i < j$, replacing $h_{ii} (=1)$ by $p$, a choice of each $h_{ki}$ $(k \neq j)$ from $\{0, \ldots, p-1\}$ and of $h_{ji}$ from $\{1, \ldots, p-1\}$ ($h_{ji}$ cannot be $0$ by the previous lemma) yields a representative of $\Gamma e(p^\alpha)\Gamma$, and there are $p^{d-i-1}(p-1)$ of this kind. Therefore, for each $h$ there is a total of $p^{d-j} + p^{d-j}(p-1) + p^{d-j+1}(p-1) + \ldots + p^{d-2}(p-1) = p^{d-1}$ coset representatives of $\Gamma e(p^{\alpha}) \Gamma$ constructed in this manner, as desired. It remains to show that these representatives do not overlap with those constructed from a different choice of $h$. But this is immediate since, given a representative of $\Gamma e(p^{\alpha}) \Gamma$, one can read off which representative of $\Gamma e(p^{\alpha-1}) \Gamma$ it came from, by discarding the first factor of $p$ that appears in its diagonal.
\end{proof}

\subsection{A computable expression for $P^1(\lat L,d,H)$}

For $\lat A \in \Gr(\lat L, d)$, recall we defined $f_H(\lat A) = 1$ if $\det \lat A \leq H$ and $0$ otherwise. Also, as in the statement of Lemma \ref{lemma:count} write $e(k) := \mathrm{diag}(1, \ldots, 1, k)$. Thanks to Lemmas \ref{lemma:triple}, \ref{lemma:prim} and \ref{lemma:count}, we can rewrite $P^1(\lat L, d, H)$ as
\begin{equation} \label{eq:p1rewrite}
\sum_{B \in \Gamma \backslash \mathrm{Mat}^{pr}_{d \times (n-1)}(\Z)} \sum_{k \geq 1} \sum_{h} \sum_{b \in \Z^d \atop (hB; b)\, \mathrm{prim.}} f_H\left(
\left(hB; b \right) L \right),
\end{equation}
where the sum over $h$ is taken over all coset representatives of $\Gamma e(k)\Gamma$ in the standard form.


Fix $h, k, B$ for a moment, and consider the innermost summation in \eqref{eq:p1rewrite}. For some $B' \sim B$, it is equal to (\emph{cf}. Lemma \ref{lemma:prim})

\begin{align} \notag
& \sum_{b \in \Z^d \atop (k, b_d) = 1} f_H\left(\left(e(k)B'; b \right) L\right) \\ \notag
&= \sum_{l | k}\mu(l)\sum_{b \in \Z^d} f_H\left(\left(e(k)B'; e(l)b \right) L\right) \\ \notag
&= \sum_{l | k}\mu(l)\sum_{b \in \Z^d} f_H\left(\left(e(k)B'; e(l)b + e(k)t \right) \tilde{L}\right) \\ 
&= \sum_{l | k}\mu(l)\sum_{b \in \Z^d} f_H\left(e(k)B'\bar{L} + (e(l)b + e(k)t)v_n \right), \label{eq:p1inner}
\end{align}
where $\mu$ is the M\"obius function, and we wrote
\begin{equation*}
t = \begin{pmatrix} \sum_j a_jb'_{1j} \\ \vdots \\ \sum_j a_jb'_{dj} \end{pmatrix}
\end{equation*}
for short. Note that $v_n$ is a row vector, whereas $b$ and $t$ are column vectors.

Temporarily write $X = (e(l)b + e(k)t)v_n$ and $Y = e(k)B'\bar{L}$. We need to compute the determinant of $X + Y$. First observe that
\begin{equation*}
(X+Y)(X+Y)^\tr = XX^\tr + XY^\tr + YX^\tr + YY^\tr = XX^\tr + YY^\tr,
\end{equation*}
because $v_n \bar L^\tr = 0$, and also
\begin{equation*}
XX^\tr = \|v_n\|^2 \left(e(l)b + e(k)t\right)\left(e(l)b + e(k)t\right)^\tr.
\end{equation*}
This motivates the use of the matrix-determinant lemma, which asserts that for a $d \times d$ matrix $A$ and (row) vectors $x, y \in \R^d$, $\det (A + x^\tr y) = \det A \cdot (1 + y A^{-1} x^\tr)$. To this end, we also need the following lemma.

\begin{lemma}
Let $Y$ be a full-rank $d \times n$ matrix whose $i$-th row equals $y_i \in \mathbb{R}^n$. Let $z_1, \ldots, z_d \in \mathbb{R}^n$ such that they form the basis of the polar lattice spanned by $y_1, \ldots, y_d$ and that $\langle z_i, y_j \rangle = \delta_{ij}$. Let $Z = Y^P$ be the $d \times n$ matrix whose $i$-th row equals $z_i$. Then the inverse of $YY^\tr$ is given by $ZZ^\tr$.
\end{lemma}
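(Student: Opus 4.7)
The plan is to reduce the problem to a square-matrix identity by exploiting the fact that the rows of $Y$ and $Z$ span the same $d$-dimensional subspace $W \subseteq \mathbb{R}^n$. This is built into the definition of the polar lattice, since $z_1,\ldots,z_d$ lie in $\mathbb{R} \otimes \mathrm{span}(y_1,\ldots,y_d)$ by construction. Also, note that the defining condition $\langle z_i, y_j\rangle = \delta_{ij}$ is precisely the matrix identity $ZY^T = I_d$ (equivalently $YZ^T = I_d$), which will be the main algebraic input.

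First I would pick an orthonormal basis $\{e_1,\ldots,e_d\}$ of the subspace $W$ and write each $y_i$ and each $z_j$ in these coordinates, obtaining $d \times d$ matrices $\widetilde{Y}$ and $\widetilde{Z}$. Since a change to an orthonormal basis preserves inner products, we have
\begin{equation*}
YY^T = \widetilde{Y}\widetilde{Y}^T, \qquad ZZ^T = \widetilde{Z}\widetilde{Z}^T, \qquad \widetilde{Y}\widetilde{Z}^T = YZ^T = I_d.
\end{equation*}
The full-rank hypothesis on $Y$ ensures that $\widetilde{Y}$ is an invertible $d \times d$ matrix.

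Next I would invoke the square-matrix identity: from $\widetilde{Y}\widetilde{Z}^T = I_d$ we get $\widetilde{Z}^T = \widetilde{Y}^{-1}$, hence $\widetilde{Z} = (\widetilde{Y}^T)^{-1}$. Therefore
\begin{equation*}
\widetilde{Z}\widetilde{Z}^T = (\widetilde{Y}^T)^{-1}\widetilde{Y}^{-1} = (\widetilde{Y}\widetilde{Y}^T)^{-1}.
\end{equation*}
Translating back via the identifications above yields $ZZ^T = (YY^T)^{-1}$, which is the desired statement.

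The proof is essentially routine once one notices the coincidence of row spans; the main thing to be careful about is not to confuse $d \times d$ identities with $n \times n$ ones. In particular, the naive attempt to use $YZ^T = I_d$ directly inside $(YY^T)(ZZ^T) = Y(Y^TZ)Z^T$ fails because $Y^TZ$ is an $n \times n$ matrix of rank only $d$, not the identity. Collapsing everything to the $d$-dimensional subspace $W$ via an orthonormal basis is the clean way to avoid this pitfall, and it is the only conceptual step in the argument.
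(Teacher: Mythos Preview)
Your proof is correct and arguably cleaner than the paper's. Both arguments hinge on the observation that the rows of $Y$ and $Z$ span the same $d$-dimensional subspace, but they exploit it in opposite directions: you \emph{restrict} to that subspace via an orthonormal basis, reducing everything to invertible $d \times d$ matrices, whereas the paper \emph{extends} $Y$ and $Z$ to invertible $n \times n$ matrices $\bar{Y}$, $\bar{Z}$ by appending rows orthogonal to the subspace, then uses the block structure of $\bar{Y}\bar{Y}^T\bar{Z}\bar{Z}^T = I_n$ to isolate the top-left $d \times d$ block. Your route is shorter and avoids the mild abuse of notation the paper commits when writing $Y = \binom{Y}{0}$; the paper's route has the minor advantage of never needing to choose coordinates on the subspace. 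Either way the content is the same: once the row spans coincide, the duality condition $YZ^T = I_d$ forces $ZZ^T = (YY^T)^{-1}$.
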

\begin{proof}
Complete $Y$ to an invertible $n \times n$ matrix $\bar{Y} = \binom{Y}{Y'}$, such that the rows of $Y'$ are orthogonal to the rows of $Y$. Similarly complete $Z$ to $\bar{Z} = \binom{Z}{Z'}$, so that the rows of $\bar{Z}$ form the dual basis to that formed by the rows of $\bar{Y}$. Then the rows of $Z'$ are orthogonal to the rows of $Z$ as well.

Since $\bar{Z}$ and $\bar{Y}^\tr$ are inverses of each other, we have $\bar{Y}\bar{Y}^\tr\bar{Z}\bar{Z}^\tr = I$. By abuse of language, write $Y = \binom{Y}{0}, Y' = \binom{0}{Y'}$, and similarly with $Z$. Then
\begin{equation*}
\bar{Y}\bar{Y}^\tr\bar{Z}\bar{Z}^\tr = (Y + Y')(Y^\tr Z + Y'^\tr Z')(Z^\tr + Z'^\tr) = YY^\tr ZZ^\tr + Y'Y'^\tr Z'Z'^\tr,
\end{equation*}
and observe that the first term on the right is zero outside the first $d \times d$ submatrix, and the second term is zero outside the ``last'' $(n-d) \times (n-d)$ submatrix. This completes the proof.
\end{proof}

Thanks to the above lemma, with $Z = Y^P$ we compute that $\det (X+Y)$ is the square root of
\begin{align*}
\det(XX^\tr + YY^\tr) &= \det\left(\|v_n\|^2\left(e(l)b + e(k)t\right)\left(e(l)b + e(k)t\right)^\tr + YY^\tr\right) \\
&= \det(YY^\tr)\left(1+ \|v_n\|^2 \left(e(l)b + e(k)t\right)^\tr (ZZ^\tr) \left(e(l)b + e(k)t\right) \right) \\
&= k^2\det(B'\bar{L})^2\left(1 + \|v_n\|^2\left\|(e(l)b + e(k)t)^\tr e(k^{-1})(B'\bar{L})^P\right\|^2\right).
\end{align*}
In the last line, we used the fact that $Z = \left( e(k)B'\bar{L} \right)^P = e(k^{-1})(B'\bar{L})^P$.

Let
\begin{equation*}
K(B) = \frac{1}{\|v_n\|}\sqrt{\frac{H^2}{k^2\det(B\bar{L})^2} - 1}
\end{equation*}
if $H \geq k\det(B\bar{L})$, and set $K(B) = 0$ otherwise. We will use this notation throughout the rest of the paper. Then \eqref{eq:p1inner} becomes

\begin{equation*}
\sum_{l | k}\mu(l) \cdot\left(
\mbox{\begin{tabular}c
number of vectors (nonzero, if $k \neq 1$) in the lattice \\
spanned by the rows of $e(l/k)(B'\bar{L})^P$\\
whose translates by $t^\tr(B'\bar L)^P$ have length $\leq K(B')$
\end{tabular} }\right).
\end{equation*}

The lemma below ensures that the translation of the vectors by $t^\tr(B'\bar L)^P$ does not present any extra difficulty in our estimate of this sum.

\begin{lemma} \label{lemma:case1}
Let $\Lambda \in \R^d$ be a lattice of rank $d$, and $t \in \R^d$. Temporarily denote by $N(r)$ the number of points $v \in \Lambda + t$ with $\|v\| \leq r$. Then
\begin{equation*}
N(r) = \frac{V(d)r^d}{\det \Lambda} + O\left(\sum_{i=1}^d \frac{r^{d-i}}{\det \Lambda_{d-i}}\right),
\end{equation*}
where the implicit constant depends on $d$ only.
\end{lemma}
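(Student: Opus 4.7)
The plan is to induct on $d$ and adapt the proof of \eqref{eq:case1} (Lemma 2 of \cite{Sch}), which handles the case $t = 0$; the translate affects neither the volume of a Euclidean ball nor the number of ``slices'' in a successive-minima decomposition, and only contributes $O(1)$ boundary corrections at each level that are absorbed by the stated error. The base case $d = 1$ is immediate: if $\Lambda = \Z v_1$ with $\|v_1\| = \lambda_1$, then $N(r)$ counts integers $m$ with $\|mv_1 + t\| \leq r$, and $N(r) = 2r/\lambda_1 + O(1) = V(1)r/\det \Lambda + O(1)$, matching the claim under the convention $\det \Lambda^{(|-1)} = 1$.

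For the inductive step, I would fix a Minkowski-reduced basis $v_1, \ldots, v_d$ of $\Lambda$, put $\Lambda' = \Lambda^{(|-1)} = \Z v_1 \oplus \cdots \oplus \Z v_{d-1}$ and $W = \spn_\R \Lambda'$, and let $\pi \colon \R^d \to W^\perp$ be the orthogonal projection. Then $\pi(\Lambda)$ is a rank-$1$ lattice generated by $u = \pi(v_d)$, with $\|u\| = \det \Lambda/\det \Lambda'$. Decomposing $t = t_0 + t_1$ with $t_0 \in W$ and $t_1 \in W^\perp$, every element of $\Lambda + t$ lies in a unique affine hyperplane $W + mu + t_1$ indexed by $m \in \Z$. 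Counting slice by slice,
\begin{equation*}
N(r) = \sum_{m \in \Z,\, \|mu+t_1\|\leq r} N_m(r), \qquad N_m(r) = \bigl|(\Lambda'+t_0) \cap B_W(0, R_m)\bigr|,
\end{equation*}
where $R_m = \sqrt{r^2 - \|mu+t_1\|^2}$, and the number of nonempty slices is $\ll r/\|u\| + 1$.

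The inductive hypothesis applied to $(\Lambda', t_0) \subset W$ gives
\begin{equation*}
N_m(r) = \frac{V(d-1)R_m^{d-1}}{\det \Lambda'} + O\left( \sum_{i=1}^{d-1} \frac{R_m^{d-1-i}}{\det (\Lambda')^{(|-i)}} \right).
\end{equation*}
Summing over $m$: the main terms form a Riemann sum approximation of $\frac{1}{\|u\|\det \Lambda'}\int_{-r}^{r} V(d-1)(r^2-s^2)^{(d-1)/2}\,ds = \frac{V(d)r^d}{\det \Lambda}$, with Riemann-sum error $\ll r^{d-1}/\det \Lambda' = r^{d-1}/\det \Lambda^{(|-1)}$ from one extra boundary term, producing the $i = 1$ contribution of the final error. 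Since $v_1, \ldots, v_{d-1}$ remains reduced in $\Lambda'$, Minkowski's second theorem yields $\det(\Lambda')^{(|-i)} \sim \det \Lambda^{(|-(i+1))}$; summing the propagated error over the $\ll r/\|u\| + 1$ slices gives the terms with $j = 2, \ldots, d$ of $\sum_j r^{d-j}/\det \Lambda^{(|-j)}$ after re-indexing $j = i + 1$.

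The main obstacle is the bookkeeping, specifically verifying that the ``many-slices'' contribution $(r/\|u\|)\cdot R_{\max}^{d-1-i}/\det\Lambda^{(|-(i+1))}$ is dominated by $r^{d-i}/\det \Lambda^{(|-i)}$. Via Minkowski, this amounts to $\det \Lambda^{(|-1)} \det \Lambda^{(|-i)} \ll \det \Lambda \det \Lambda^{(|-(i+1))}$, which reduces to $\lambda_{d-i} \leq \lambda_d$ and so holds trivially. The remaining regime $r < \|u\|$, where at most one slice is nonempty, is handled by the inductive estimate directly, with the main term $V(d)r^d/\det \Lambda$ then absorbed into the $i = 1$ error since $r^d/\det \Lambda \leq r^{d-1}/\det \Lambda^{(|-1)}$ whenever $r \leq \|u\| \sim \det \Lambda/\det \Lambda^{(|-1)}$.
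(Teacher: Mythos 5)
Your proof is correct, but it takes a genuinely different route from the paper's. The paper (following Schmidt's Lemma 2) argues by the classic parallelepiped-sandwiching method: after normalizing $r=1$, it splits into the cases $\lambda_d \leq 1$ and $\lambda_d > 1$; in the first case it covers $B(1-d\lambda_d)$ by translates $v+F$ (with $F$ a fundamental parallelepiped of diameter $\ll \lambda_d$) over $v \in (\Lambda+t)\cap B(1)$ and uses containment in $B(1+d\lambda_d)$ to bound $|N(1)\det\Lambda - V(d)|$ directly by a volume estimate; in the second case it notes that $(\Lambda+t)\cap B(1)$ lies in $O(1)$ translates of $\Lambda^{(|-1)}$ and falls back to the inductive hypothesis. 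Your argument instead slices along the codimension-one sublattice $\Lambda^{(|-1)}$, applies the inductive estimate to each slice, and recombines via a unimodal Riemann-sum bound (error $\ll \max f = V(d-1)r^{d-1}$). Both close the induction with essentially the same Minkowski bookkeeping. Your route is closer in spirit to the Riemann--Stieltjes machinery the paper uses elsewhere (Lemma 4.1, Section 4), while the paper's route for this particular lemma is shorter and avoids any discussion of Riemann-sum error for the $(r^2-s^2)^{(d-1)/2}$ profile. One small inaccuracy in your exposition: the affine shift within the slice indexed by $m$ is $t_0 + m\,\pi_W(v_d)$, not just $t_0$ — but this is harmless since the inductive hypothesis is uniform over translates.
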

\begin{proof}
This is Lemma 2 in \cite{Sch} generalized to an affine lattice, and is also a special case of Theorem 5.4 in \cite{Wid}. We provide a proof here for completeness.

We proceed by induction on $d$. The base case $d=1$ is clear. Now assume the lemma for $d-1$. By adjusting $\det \Lambda$, we may assume $r = 1$.

First consider the case $\lambda_d \leq 1$. Let $x_i \in \Lambda$, $i \in \{1, \ldots, d\}$, be a vector with $\|x_i\| = \lambda_i$, and consider the parallelepiped spanned by $x_1, \ldots, x_d$. Its diameter is $\leq \lambda_1 + \ldots + \lambda_d \leq d\lambda_d$, and it contains a fundamental parallelepiped $F$ of $\Lambda$, which also has diameter $\leq d\lambda_d$.

Write $B(s)$ for the ball in $\R^n$ at the origin of radius $s$. Then since $B(\max(0,1-d\lambda_d)) \subseteq (\Lambda + t) \cap B(1) + F \subseteq B(1 + d\lambda_d)$, we have
\begin{align*}
|N(r)\det\Lambda - V(d)| &\leq V(d)((1+d\lambda_d)^d - \max(0,1-d\lambda_d)^d) \\
&\leq V(d)(2d\lambda_d)^dd,
\end{align*}
and thus
\begin{equation*}
\left|N(r) - \frac{V(d)}{\det\Lambda}\right| = O\left(\frac{\lambda_d}{\det \Lambda}\right) = O\left(\frac{1}{\det \Lambda_{d-1}}\right),
\end{equation*}
where the second equality follows from the Minkowski's second theorem.

It remains to consider the case $\lambda_d > 1$. Then $(\Lambda + t) \cap B(1)$ lies in at most two translates of $\Lambda_{d-1}$ in the direction of $x_d$. Thus the induction hypothesis implies $N(r) = O\left(\sum_{i=1}^d 1/\det \Lambda_{d-i}\right)$. Also we have
\begin{equation*}
\frac{1}{\det\Lambda} < \frac{\lambda_d}{\det\Lambda} = O\left(\frac{1}{\det\Lambda_{d-1}}\right)
\end{equation*}
as above. This completes the proof.

\end{proof}

It follows that \eqref{eq:p1inner} equals
\begin{equation*}
\sum_{l | k} \mu(l) \left(\frac{V(d)K(B')^d}{\det \mathfrak L(l/k, (B'\bar{L})^P)} + O\left(\sum_{i=1}^d \frac{K(B')^{d-i}}{\det\mathfrak L(l/k, (B'\bar{L})^P)_{d-i}}\right)\right),
\end{equation*}
where $\mathfrak L(x, M)$ here denotes the lattice spanned by the row vectors of $e(x)M$. We have $\mathfrak L(l/k, (B'\bar L)^P) = \mathfrak L(k/l, B'\bar L)^P$, and $\det (\mathfrak L(k/l, B'\bar L)^P)_{d-i} \gg \det \mathfrak L(k/l, B'\bar L)_{i} / \det\mathfrak L(k/l, B'\bar L)$ by \eqref{eq:duality2}. Also, $\det \mathfrak L(k/l, B'\bar L)_{i} \gg \det \mathfrak L(1, B'\bar L)_{i}$, so the above sum can be rewritten as
\begin{equation*}
\sum_{l | k} \mu(l) \frac{k}{l}\left(\frac{V(d)K(B')^d}{\det \mathfrak L(1, B'\bar L)^P} + O\left(\sum_{i=1}^d \frac{K(B')^{d-i}\det \mathfrak L(1, B'\bar L)}{\det\mathfrak L(1, B'\bar L)_i}\right)\right),
\end{equation*}
which we in turn rewrite as, for the lattice $\lat B \in \Gr(\Z^{n-1},d)$ spanned by $B'$,
\begin{equation*}
\sum_{l | k} \mu(l) \frac{k}{l}\left(\frac{V(d)K(\lat B)^d}{\det (\lat B \bar L)^P} + O\left(\sum_{i=1}^d \frac{K(\lat B)^{d-i}\det \lat B \bar L}{\det (\lat B\bar L)_i}\right)\right).
\end{equation*}

Summing up all our work in this section, we deduce that \eqref{eq:p1rewrite} equals
\begin{align}
\notag& \sum_{\lat B \in \Gr(\Z^{n-1}, d)} \sum_{k \geq 1} \prod_{p | k \atop p^\alpha \| k} p^{(\alpha-1)(d-1)}(1 + p + \ldots + p^{d-1}) \sum_{l | k} \mu(l) \frac{k}{l}\left(\frac{V(d)K(\lat B)^d}{\det (\lat B \bar L)^P} + O\left(\sum_{i=1}^d \frac{K(\lat B)^{d-i}\det \lat B \bar L}{\det (\lat B\bar L)_i}\right)\right)\\
\label{eq:p1mid}&= \sum_{k \geq 1} \prod_{p | k \atop p^\alpha \| k} p^{(\alpha-1)(d-1)}(1 + p + \ldots + p^{d-1}) \varphi(k) V(d)\sum_{\lat B \in \Gr(\Z^{n-1}, d)} \left(K(\lat B)^d\det \lat B \bar L + O\left(\sum_{i=1}^d \frac{K(\lat B)^{d-i}\det \lat B \bar L}{\det (\lat B\bar L)_i}\right)\right). \\ \notag
\end{align}
Here $\varphi(k) = \sum_{l | k} \mu(l)\frac{k}{l}$ is the Euler totient.

The remainder of this paper is devoted to computing \eqref{eq:p1mid}. Because $K(\lat B)$ depends on $k$, we cannot deal with the constant factor just yet. However, we will later use

\begin{lemma}\label{lemma:zeta}
For $m > d + 1$,
\begin{equation*}
\sum_{k \geq 1} \prod_{p | k \atop p^\alpha \| k} p^{(\alpha-1)(d-1)}(1 + p + \ldots + p^{d-1}) \cdot \varphi(k)k^{-m} = \frac{\zeta(m-d)}{\zeta(m)}.
\end{equation*}
\end{lemma}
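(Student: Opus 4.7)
\textbf{Proof plan for Lemma \ref{lemma:zeta}.} The plan is to recognize the left-hand side as a Dirichlet series whose coefficients are multiplicative, express it as an Euler product, compute each local factor in closed form, and identify the result with $\zeta(m-d)/\zeta(m)$.

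First I would define $f(k)$ to be the coefficient of $k^{-m}$, namely
\[
f(k) = \varphi(k) \prod_{p^\alpha \| k} p^{(\alpha-1)(d-1)}(1+p+\cdots+p^{d-1}).
\]
Since $\varphi$ is multiplicative and the product over $p^\alpha \| k$ is manifestly multiplicative, $f$ is multiplicative. Hence, for $m > d+1$ (which guarantees absolute convergence, as $f(k)$ grows like $k^d$), the series factors as
\[
\sum_{k \ge 1} f(k) k^{-m} = \prod_p \left( 1 + \sum_{\alpha \ge 1} f(p^\alpha)\, p^{-\alpha m}\right).
\]

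Next I would simplify the local factor at $p$. Using $\varphi(p^\alpha)= p^{\alpha-1}(p-1)$ and the elementary identity $(1+p+\cdots+p^{d-1})(p-1) = p^d - 1$, we get
\[
f(p^\alpha) = p^{(\alpha-1)(d-1)}\cdot p^{\alpha-1}\cdot(p^d-1) = p^{(\alpha-1)d}(p^d-1).
\]
Substituting $\beta = \alpha - 1$ and summing the geometric series,
\[
1 + \sum_{\alpha \ge 1} f(p^\alpha) p^{-\alpha m} = 1 + (p^d-1)p^{-m}\sum_{\beta \ge 0} p^{\beta(d-m)} = 1 + \frac{p^d - 1}{p^m - p^d} = \frac{p^m-1}{p^m-p^d} = \frac{1-p^{-m}}{1-p^{-(m-d)}}.
\]

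Finally, taking the product over all primes and recognizing the two Euler products yields
\[
\prod_p \frac{1-p^{-m}}{1-p^{-(m-d)}} = \frac{\zeta(m-d)}{\zeta(m)},
\]
which is the claim. There is no real obstacle here: the only nontrivial step is the algebraic manipulation of the local factor, which hinges on the lucky cancellation $(1+p+\cdots+p^{d-1})(p-1)=p^d-1$. Everything else is routine bookkeeping.
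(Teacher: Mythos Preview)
Your proof is correct and follows essentially the same approach as the paper: both recognize the sum as multiplicative, reduce to an Euler product, and simplify each local factor to $(1-p^{-m})/(1-p^{-(m-d)})$. The only cosmetic difference is that the paper packages the local term as $p^{-(m-d)\alpha}(1-p^{-d})$ before summing, whereas you write $f(p^\alpha)=p^{(\alpha-1)d}(p^d-1)$ and sum the geometric series directly.
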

\begin{proof}
We can write the expression under question multiplicatively as
\begin{equation*}
\sum_{k \geq 1} \prod_{p | k \atop p^\alpha \| k} p^{-(m-d)\alpha}\left(1 - \frac{1}{p^d}\right) = \prod_p \left(1 + \sum_{i \geq 1}(1 - p^{-d})p^{-i(m-d)}\right),
\end{equation*}
which then becomes
\begin{align*}
&\prod_p \left(\sum_{i \geq 0} p^{-i(m-d)} - p^{-m}\sum_{i \geq 0} p^{-i(m-d)}\right) \\
&=\prod_p (1 - p^{-m})(1 - p^{m-d})^{-1} \\
&=\frac{\zeta(m-d)}{\zeta(m)}.
\end{align*}
\end{proof}


\section{Main term of \eqref{eq:p1mid}}

In this section, we estimate the intended main term of \eqref{eq:p1mid}, namely
\begin{equation} \label{eq:maingoal}
\sum_{\lat B \in \Gr(\Z^{n-1}, d)} K(\lat B)^d\det(\lat B\bar{L}),
\end{equation}
for each $k \geq 1$ and $2 \leq d \leq n-2$. We may also assume $H \geq k\min_\lat B \det(\lat B\bar{L})$, since otherwise \eqref{eq:maingoal} is equal to $0$. Our approach is essentially that of Schmidt \cite{Sch}, who uses summation by parts. We improve it somewhat by adopting the language of the Riemann-Stieltjes integral, in order to simplify the computation and to derive pretty error terms.

Let us rewrite \eqref{eq:maingoal} as
\begin{equation} \label{eq:4.1re}
\frac{1}{\|v_n\|^dk^d} Q(k,H),
\end{equation}
where
\begin{equation*}
Q(k, H) := \sum_{\lat B \in \Gr(\Z^{n-1}, d)} \psi(\det(\lat B\bar{L}))
\end{equation*}
and 
\begin{equation*}
\psi(t) = \begin{cases} t((H/t)^2 - k^2)^{d/2} & \mbox{for $0 < t \leq H/k$} \\ 0 & \mbox{otherwise} \end{cases}.
\end{equation*}
It is easy to check that $\psi(t)$ is a twice differentiable function on $0 < t \leq H/k$, with $\psi'(t) = -((d-1)(H/t)^2 + k^2)((H/t)^2 - k^2)^{(d/2-1)} \leq 0$.

Choose a $\delta > 0$ with $\delta \leq \min_\lat B \det(\lat B\bar{L})$. Write $H/k = (\alpha + s)\delta$ with $\alpha \in [0, 1)$ and $s \in \Z$. Also, let $P_1(t)$ be the number of elements $\lat B \in \Gr(\Z^{n-1}, d)$ such that $t < \det(\lat B\bar{L}) \leq t + \delta$ , and $P_2(t) = P_1(t-\delta)$. Then for $i = 1, 2$,

\begin{equation*}
(-1)^i\left(Q(k, H) - \sum_{j=0}^{s-1} \psi((\alpha+j)\delta)P_i((\alpha+j)\delta)\right) \geq 0.
\end{equation*}

Write $R_1(t) = P(\bar{\lat L}, d, t+\delta)$ and $R_2(t) = P(\bar{\lat L}, d, t)$. Since $\psi((a + s)\delta) = 0$, by the summation by parts,
\begin{align*}
&(-1)^i\Big(Q(k,H) - \sum_{j=0}^{s-1} R_i((\alpha+j)\delta)(\psi((\alpha+j)\delta) - \psi((\alpha+j+1)\delta))\Big) \geq 0.
\end{align*}

Thus we have bounded $Q(k, H)$ from both sides by certain Riemann-Stieltjes sums. We need to show that those sums  converge as $\delta \rightarrow 0$ (and thus $s \sim H/k\delta \rightarrow \infty$). First, observe that, since $R_i$'s are supported strictly away from zero by $\varepsilon = \min_\lat B \det(\lat B\bar{L})$, we may assume the same of $\psi$, so that $\psi$ is of bounded variation. Second, $R_i$ are clearly not continuous, but by the induction hypothesis on $n$, we know it is bounded from both sides by a polynomial in $t$. More precisely,
\begin{equation*}
R_2(t) = a(n-1,d)\frac{t^{n-1}}{\det(\bar{\lat L})^d} + O\left(\sum_{j\in E_{n-1,d}} c_j t^{\gamma_j}\right)
\end{equation*}
where $c_j = b_j(\bar{\lat L})$ is as in Theorem \ref{thm:main}, and
\begin{equation*}
R_1(t) = R_2(t) + O_{t, \bar{\lat L}}(\delta).
\end{equation*}
By Theorem 6.8 of Rudin (\cite{Rudin}), we have shown that
\begin{equation} \label{eq:oink}
Q(k, H) = \frac{a(n-1,d)}{(\det\bar{\lat L})^d}\int_\varepsilon^{H/k} -t^{n-1}\psi'(t)dt + O\left(\sum_{j \in E_{n-1,d}} c_j \int_\varepsilon^{H/k} -t^{\gamma_j}\psi'(t)dt \right).
\end{equation}

Since the same argument will be used repeatedly later in this paper, we summarize our discussion so far in the form of a lemma:
\begin{lemma} \label{lemma:r-s}
Assume Theorem \ref{thm:main} for $n=m$, and let $\lat M$ be a lattice of rank $m$. Suppose $\psi$ is a decreasing twice differentiable function supported on $[a, b]$. Then
\begin{equation*}
\sum_{\lat B \in \Gr(\lat M, d)} \psi(\det \lat B) = \frac{a(m,d)}{(\det \lat M)^d} \int_a^b -t^m\psi'(t) dt + O\left(\sum_{j \in E_{m,d}} b_j(\lat M) \int_a^b -t^{\gamma_j} \psi'(t)dt \right).
\end{equation*}
\end{lemma}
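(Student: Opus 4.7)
The lemma is the abstraction of the Riemann-Stieltjes argument just carried out for $Q(k,H)$ leading up to \eqref{eq:oink}, so the plan is to repeat that argument verbatim for a general decreasing twice-differentiable $\psi$ supported on $[a,b]$. First I would set $R(t) = P(M, d, t) = \#\{B \in \Gr(M,d) : \det B \leq t\}$; by the inductive hypothesis (Theorem \ref{thm:main} at $n = m$) this satisfies
\[
R(t) = \frac{a(m,d)}{(\det M)^d} t^m + O\left(\sum_{\gamma \in \Q \cap [0,m)} b_\gamma t^\gamma\right).
\]
The goal, formally, is to interpret $\sum_B \psi(\det B)$ as the Riemann-Stieltjes integral $\int_a^b \psi(t)\, dR(t)$ and integrate by parts, observing that the boundary terms vanish because $\psi$ is supported in $[a,b]$.

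To make this rigorous, I would partition $[a,b]$ into subintervals of length $\delta$ and sandwich $\sum_B \psi(\det B)$ between the two Riemann sums built from $P_1(t) = \#\{B : t < \det B \leq t+\delta\}$ and $P_2(t) = P_1(t-\delta)$, exactly as in the treatment of $Q(k,H)$. Abel summation converts these into Riemann-Stieltjes approximations of the form
\[
\sum_{j=0}^{s-1} R_i(a + j\delta)\bigl(\psi(a + j\delta) - \psi(a + (j+1)\delta)\bigr),
\]
where $R_i$ is the corresponding step counting function and the endpoint $\psi(b)=0$ eliminates the boundary term. Letting $\delta \to 0$ (using the same auxiliary $\delta'$ trick employed above to handle $R_1(t) \leq$ polynomial in $t+\delta$) produces the Stieltjes integral $-\int_a^b R(t)\psi'(t)\,dt$.

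Finally, substituting the asymptotic for $R(t)$ and exploiting $-\psi'(t) \geq 0$ to pull the $O(\cdot)$ through the integration yields the claimed identity. The only genuinely non-routine point is the passage to $\delta \to 0$, which is justified by $\psi$ being monotone (hence of bounded variation) on the bounded interval $[a,b]$ together with the polynomial two-sided control on $R$ inherited from Theorem \ref{thm:main}; since this is line-for-line the argument used to derive \eqref{eq:oink}, no new work is needed and the lemma is simply an encapsulation of that calculation for later reuse.
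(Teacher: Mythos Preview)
Your proposal is correct and matches the paper's approach exactly: the paper states this lemma immediately after the derivation of \eqref{eq:oink} with the remark that it ``summarize[s] our discussion so far,'' and offers no separate proof. Your outline reproduces that discussion (the $\delta$-partition, the $P_1/P_2$ sandwich, Abel summation to $R_i$, the $\delta'$ trick, and the passage $\delta\to 0$ via bounded variation of $\psi$ and polynomial control on $R$) faithfully.
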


We return to estimating \eqref{eq:oink}. Recall $\varepsilon = \min_\lat B \det(\lat B\bar{L}) \sim \prod_{i=1}^d \lambda_i(\bar{\lat L})$. In \eqref{eq:oink}, for the integrals inside the $O$-notation, there is no harm in replacing $\varepsilon$ with $0$ if $\gamma > d-1$. For the main term, we can do the same at the cost of
\begin{equation*}
\frac{1}{(\det\bar{\lat L})^d}\int_0^\varepsilon -t^{n-1}\psi'(t) dt \ll \frac{1}{(\det\bar{\lat L})^d}\int_0^\varepsilon H^dt^{n-d-1} dt \sim \frac{H^d\varepsilon^{n-d}}{(\det\bar{\lat L})^d} \ll \frac{H^d}{\varepsilon^{d-1}}.
\end{equation*}

Now the main term of $Q(k,H)$ contributes

\begin{align*}
&\frac{a(n-1,d)}{(\det\bar{\lat L})^d}\int_0^{H/k} -t^{n-1}\psi'(t)dt \\
&= \frac{a(n-1,d)}{(\det\bar{\lat L})^d}\left(-t^{n-1}\psi(t)\Big|_0^{H/k} + (n-1)\int_0^{H/k} t^{n-2}\psi(t)dt\right) \\
&= \frac{a(n-1,d)}{(\det\bar{\lat L})^d}(n-1)\int_0^{H/k} t^{n-1}\left(\frac{H^2}{t^2} - k^2\right)^{\frac{d}{2}}dt \\
&= \frac{a(n-1,d)}{(\det\bar{\lat L})^d}(n-1)H^nk^{-n+d}\int_0^1x^{n-d-1}(1-x^2)^{\frac{d}{2}}dt \\
&= \frac{a(n-1,d)}{(\det\bar{\lat L})^d}\frac{(n-1)V(n)}{(n-d)V(n-d)V(d)}H^nk^{-n+d} \\
&= \frac{a(n,d)}{(\det\bar{\lat L})^d}\frac{\zeta(n)}{\zeta(n-d)V(d)}H^nk^{-n+d}.
\end{align*}
For the second last equality, we used the identity on the beta function (see e.g. \cite[Section 6.2.1]{abs})
\begin{equation*}
\mathrm{B}(a, b) = 2\int_0^1 x^{2a-1}(1-x^2)^{b-1}dx,
\end{equation*}
and the last equality follows from the definition of $a(n,d)$.

Now accounting for the factor of $1/(\|v_n\|^dk^d)$ in $\eqref{eq:4.1re}$, we obtain for the intended main term of \eqref{eq:maingoal}
\begin{equation*}
\frac{a(n,d)}{(\det\bar{\lat L})^d\|v_n\|^dk^d}\frac{\zeta(n)}{\zeta(n-d)V(d)}H^nk^{-n+d} = \frac{a(n,d)}{(\det{\lat L})^d}\frac{\zeta(n)}{\zeta(n-d)V(d)}H^nk^{-n}.
\end{equation*}
It is clear that this term is scale-invariant i.e. invariant under replacing $\lat L$ and $H$ by $c\lat L$ and $c^dH$ for any $c>0$.



The error terms of $Q(k,H)$ are dealt with in a similar way, only simpler. For $j \in E_{m,d}$ with $\gamma_j > d-1$, the corresponding term contributes
\begin{align*}
&c_j\int_{\varepsilon}^{H/k} -t^{\gamma_j}\psi'(t)dt \\
&\leq c_j\int_{0}^{H/k} -t^{\gamma_j}\psi'(t)dt \\
&= -c_j t^{\gamma_j}\psi(t)\Big|_0^{H/k} + c_j \gamma_j\int_0^{H/k}t^{\gamma_j-1}\psi(t)dt \\
&\ll c_j \int_0^{H/k} t^{\gamma_j} (H/t)^d dt \\
&\ll c_j H^{\gamma_j+1}k^{d-\gamma_j-1}.
\end{align*}
It is apparent that $c_j H^{\gamma_j+1}k^{d-\gamma_j-1} / (\|v_n\|^dk^d)$ is scale-invariant, since both $c_jH^{\gamma_j}$ and $H/\|v_n\|^d$ are.

For those with $\gamma_j \leq d-1$, we proceed as follows:
\begin{align*}
&c_j\int_{\varepsilon}^{H/k} -t^{\gamma_j}\psi'(t)dt \\
&= -c_j t^{\gamma_j}\psi(t)\Big|_\varepsilon^{H/k} + c_j \gamma_j\int_\varepsilon^{H/k}t^{\gamma_j-1}\psi(t)dt \\
&\leq c_j\varepsilon^{\gamma_j+1-d}H^d + c_j\gamma_j\int_\varepsilon^{H/k}t^{\gamma_j}(H/t)^d dt \\
&\ll \begin{cases} c_jH^d\varepsilon^{-d+\gamma_j+1} &\mbox{if $\gamma_j < d-1$} \\ c_j H^{d}(1+\log\frac{H}{k\varepsilon} )&\mbox{if $\gamma_j = d-1$} \end{cases}.
\end{align*}
In case $\gamma_j = d-1$, we used our assumption $H \geq k\varepsilon$. Also, to retain the polynomial shape of the error term, we note $c_j H^{d}(1+\log\frac{H}{k\varepsilon}) = O\left(c_j H^{d+\eta}(k\varepsilon)^{-\eta}\right)$ for any $\eta > 0$, and use this bound instead. The scale-invariance can be checked in a straightforward manner.

In conclusion, we proved that \eqref{eq:maingoal} equals
\begin{equation*}
\frac{a(n,d)}{\det \lat L^d}\frac{\zeta(n)}{\zeta(n-d)V(d)}(H/k)^n + O\left(\sum_{j \in E^{(1)}} c'_j(H/k)^{\gamma_j}\right),
\end{equation*}
where $E^{(1)}$ is an index set of cardinality $|E_{n-1,d}|+1$, each $c'_j$ is a reciprocal of products of $\lambda_i(\bar{\lat L})$'s and $\|v_n\|$, so that $c'_j(H/k)^{\gamma_j}$ is invariant under the appropriate scaling. The leading error term is of degree $n-b(n-1,d)$.


\section{Error term of \eqref{eq:p1mid}}

In this section, we work on the intended error term of \eqref{eq:p1mid}, namely
\begin{equation} \label{eq:errorgoal}
\sum_{\lat B \in \Gr(\Z^{n-1}, d)} \frac{K(\lat B)^{d-i}\det(\lat B\bar{L})}{\det(\lat B\bar{L})_i}
\end{equation}
for $1 \leq i \leq d$. Rewrite \eqref{eq:errorgoal} as $1/(\|v_n\|)^{d-i}$ times
\begin{equation*}
\frac{1}{k^{d-i}}\sum_{\lat B \in \Gr(\Z^{n-1},d)} \frac{\det(\lat B\bar{L})}{\det(\lat B\bar{L})_i}\left(\frac{H^2}{\det(\lat B\bar{L})^2} - k^2\right)^{\frac{d-i}{2}}f_{H/k}(\lat B \bar L),
\end{equation*}
which we simplify and bound from above by
\begin{equation}  \label{eq:error}
(H/k)^{d-i}\sum_{\lat B \in \Gr(\bar{\lat L},d)} \frac{f_{H/k}(\lat B)}{(\det \lat B)^{d-i-1}\det \lat B_i}.
\end{equation}

Our analysis of \eqref{eq:error} depends on the ``skewness'' of $\lat B$ and $\bar{\lat L}$. We will first explain how to deal with \eqref{eq:error} in case all $\lambda_i(\bar{\lat L})$ is of size $(H/k)^{1/d}$ --- i.e. $\bar{\lat L}$ is not too skewed --- and then work out the general case.

In addition, for the rest of this section, we assume $k=1$ for simplicity. To restore the general case, one could simply replace $H$ by $H/k$.

\subsection{When $\bar{\lat L}$ is ``not skewed''}
Assume $\lambda_{n-1}(\bar{\lat L}) \leq 2^{n-1} H^{1/d}$. For each $0 \leq d' \leq d$, consider the restriction of the sum \eqref{eq:error} to those $\lat B \in \Gr(\bar{\lat L}, d)$ for which $d'$ is the lowest number such that 
\begin{equation} \label{eq:error_ns_cond}
\lambda_{d'}(\lat B) \leq 2^{d'}H^{1/d} \mbox{ and } \lambda_{d'+1}(\lat B) - \lambda_{d'}(\lat B) > 2^{d'}H^{1/d}
\end{equation}
(in fact, the former inequality follows from the latter and the minimality of $d'$), where we interpret $\lambda_0 = 0$ and $\lambda_{d+1} = \infty$. Such a sum is then bounded by a constant times
\begin{equation} \label{eq:error_noskew}
H^{(d'+1)(1-i/d)}\sum_{\lat B \in \Gr(\bar{\lat L},d)} \frac{f_{H}(\lat B)}{\left(\det \lat B_{d'}\right)^{d-i}},
\end{equation}
where the sum is over all $\lat B$ satisfying \eqref{eq:error_ns_cond}, since it follows from the Minkowski's second and \eqref{eq:error_ns_cond} that
\begin{align*}
\frac{H^{d-i}}{(\det \lat B)^{d-i-1}\det \lat B_i} & \ll \frac{H^{d-i+(d'-i)/d}}{(\det \lat B)^{d-i-1} \det \lat B_{d'}}\\
&\ll \frac{H^{d-i+(d'-i)/d - (d-d')(d-i-1)/d}}{(\det \lat B_{d'})^{d-i}} \\
&\ll \frac{H^{(d'+1)(1-i/d)}}{\left(\det \lat B_{d'}\right)^{d-i}}.
\end{align*}

The idea for bounding \eqref{eq:error_noskew} is that, because we are assuming $\lambda_{n-1}(\bar{\lat L}) \leq 2^{n-1} H^{1/d}$, we can proceed as in Section 9 of Schmidt (\cite{Sch}). The lemma below is a refinement of Lemma 6 of \cite{Sch}, so as to make explicit the dependence on the successive minima of $\bar{\lat L}$ and $\lat B$.

\begin{lemma} \label{lemma:error1}
Let $\bar{\lat L}$ be an $m$($=n-1$ in our context) dimensional lattice.
Fix a $\lat B' \in \Gr(\bar{\lat L},d')$, and let $j = d-d'$. Then the number of $\lat B \in \Gr(\bar{\lat L},d)$ such that $\lat B_{d-j} = \lat B'$, $\lambda_{d'+1}(\lat B) \gg \lambda_m(\bar{\lat L})$ and $\det \lat B \leq H$ is
\begin{equation*}
\ll \left(\frac{\det \lat B'}{\det\bar{\lat L}}\right)^j\left(\frac{H}{\det \lat B'}\right)^{m-d'},
\end{equation*}
where the implicit constant here depends only on $n$ and the implied constants on the bound relating $\lambda_{m}(\bar{\lat L})$ and $\lambda_{d'+1}(\lat B)$.
\end{lemma}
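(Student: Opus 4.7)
The plan is to reduce the count to a counting problem on the orthogonal projection of $\bar{L}$ along $B_{d'}$. Let $\pi: \R^m \to \spn_\R(B_{d'})^\perp$ denote orthogonal projection, and set $\bar{L}' := \pi(\bar{L})$. Since $B_{d'}$ is primitive in $\bar{L}$, one verifies that $\bar{L}'$ is a lattice of rank $m-d'$ with $\det \bar{L}' = \det \bar{L}/\det B_{d'}$.

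The key observation is that $B_d \mapsto \pi(B_d)$ gives a bijection between primitive rank $d$ sublattices of $\bar{L}$ that contain $B_{d'}$ primitively and primitive rank $j=d-d'$ sublattices of $\bar{L}'$. Under this bijection one has $\det B_d = \det B_{d'} \cdot \det \pi(B_d)$, since a basis of $B_d$ may be chosen to extend a basis of $B_{d'}$ by vectors that project to a basis of $\pi(B_d)$. The lemma's condition $B_d^{(|-j)} = B_{d'}$ implies in particular that $B_{d'} \subseteq B_d$ primitively, so an upper bound is obtained by relaxing to the latter condition only.

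With this in hand, it remains to bound the number of primitive rank $j$ sublattices $M$ of $\bar{L}'$ with $\det M \leq H' := H/\det B_{d'}$. For $j=1$, I would invoke the base case \eqref{eq:case1} (Schmidt's Lemma 2); for $j \geq 2$, Thunder's Theorem \ref{thm:thunder} applied to $\bar{L}'$. Either way, the leading term of the resulting count is $a(m-d',j) (H')^{m-d'}/(\det \bar{L}')^j$, and substituting $\det \bar{L}' = \det \bar{L}/\det B_{d'}$ reproduces exactly $(\det B_{d'}/\det \bar{L})^j (H/\det B_{d'})^{m-d'}$, matching the claimed bound.

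The remaining step, and the main obstacle, is to check that the error terms in Schmidt/Thunder's formula applied to $\bar{L}'$ do not exceed the leading term, so that the total count is absorbed into the stated bound. This is where the hypothesis $\lambda_{d'+1}(B_d) \gg \lambda_m(\bar{L})$ enters: the set being counted is empty unless $\det B_d \gtrsim \det B_{d'}\,\lambda_m(\bar{L})^j$, forcing $H' \gtrsim \lambda_m(\bar{L})^j \geq \lambda_{m-d'}(\bar{L}')^j$, since orthogonal projection cannot increase the largest successive minimum of an $(m-d')$-dimensional projection. A routine chase using Minkowski's second theorem then shows that every typical error term of the form $(H')^{m-d'-i}/\det (\bar{L}')^{(|-i)}$ is dominated by the leading term $(H')^{m-d'}/(\det \bar{L}')^j$ in this regime, completing the argument.
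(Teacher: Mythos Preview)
Your projection set-up is right and matches the paper's starting point: pass to $\bar{L}' = \pi(\bar{L})$ and count rank-$j$ primitive sublattices there. But the execution for $j\geq 2$ has two genuine gaps.

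First, Thunder's Theorem~\ref{thm:thunder} counts only those rank-$j$ sublattices of $\bar{L}'$ having trivial intersection with $(\bar{L}')^{(|-j)}$; it is an \emph{undercount} of $P(\bar{L}',j,H')$, so it cannot supply the upper bound you need. You could instead try the inductive hypothesis on $n$ (the main theorem for $\bar{L}'$, which has rank $m-d' < n$), but then the second gap bites: the error terms you wrote down, ``$(H')^{m-d'-i}/\det(\bar{L}')^{(|-i)}$'', are the $j=1$ (Schmidt) shape only. For $j\geq 2$ the coefficients $b_\gamma(\bar{L}')$ are products of reciprocals of several $\det(\bar{L}')^{(|-i)}$'s, and they blow up with the \emph{small} successive minima of $\bar{L}'$, which your hypothesis $H'\gtrsim \lambda_{m-d'}(\bar{L}')^{\,j}$ does not control. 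Showing these are all dominated by the main term is not a ``routine chase''; it amounts to knowing the recursive structure of the $b_\gamma$'s, which is exactly what the surrounding Section~5 is trying to establish.

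The paper sidesteps both issues by inducting on $j$ rather than appealing to any asymptotic for $P(\bar{L}',j,\cdot)$. The base case $j=1$ is a crude lattice-points-in-a-ball bound: a fundamental domain of $\bar{L}'$ has diameter $\ll \lambda_m(\bar{L})$, and since $H/\det B_{d'}\gg \lambda_m(\bar{L})$ the ball radius absorbs the diameter, giving directly
\[
\#\{v\in\bar{L}':\|v\|\leq H'\}\ \ll\ \frac{(H')^{m-d'}}{\det\bar{L}'}.
\]
For the step $j-1\Rightarrow j$, one sums over the intermediate $B_{d'+1}\supset B_{d'}$ (controlled by the $j=1$ bound, together with the cutoff $\det B_{d'+1}\ll \det B_{d'}\cdot(H/\det B_{d'})^{1/j}$ forced by $\lambda_{d'+1}(B_d)\leq\cdots\leq\lambda_d(B_d)$) and applies the Riemann--Stieltjes device of Lemma~\ref{lemma:r-s}. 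This yields the claimed bound without ever needing delicate control of the lower successive minima of $\bar{L}'$.
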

\begin{proof}
We may assume $\det \lat B' \ll H^{d'/d}$, because by Minkowski's second
\begin{equation*}
\det \lat B' \sim \lambda_1(\lat B') \ldots \lambda_{d'}(\lat B') \leq (\lambda_1(\lat B) \ldots \lambda_{d}(\lat B))^{d'/d} \ll H^{d'/d}.
\end{equation*}


We proceed by induction on $j$. Suppose first that $j=1$. Let $\pi : \spn_\R(\bar{\lat L}) \rightarrow \spn_\R(\bar{\lat L})$ be the orthogonal projection onto the orthogonal complement of $\spn_\R(\lat B')$. Then $\pi(\bar{\lat L})$ ($\cong \bar{\lat L}/\lat B'$) is a rank $m-d'$ lattice of determinant $\det\bar{\lat L}/\det \lat B'$, and $\pi(\lat B)$ is a $1$-dimensional primitive sublattice spanned by a vector whose length is $\det \lat B / \det \lat B'$. Therefore, the number of $\lat B$ is bounded by the number of primitive vectors of $\pi(\bar{\lat L})$ of length $\leq H/\det \lat B'$.

If $\mathfrak{F}$ is a fundamental domain of $\bar{\lat L}$, then $\pi(\mathfrak{F})$ is a fundamental domain of $\pi(\bar{\lat L})$. Since we can choose an $\mathfrak{F}$ of diameter $\lambda_1(\bar{\lat L}) + \ldots + \lambda_{m}(\bar{\lat L}) \leq m\lambda_{m}(\bar{\lat L})$ and $\pi$ is a contraction, $\pi(\mathfrak{F})$ has diameter $\leq m\lambda_m(\bar{\lat L})$. So the number of vectors of $\pi(\bar{\lat L})$ of length $\leq H/\det \lat B'$ is bounded by a constant times
\begin{equation} \label{eq:error1case1}
\frac{\det \lat B'}{\det\bar{\lat L}}\left(\frac{H}{\det \lat B'} + m\lambda_m(\bar{\lat L})\right)^{m-d'} \sim \frac{\det \lat B'}{\det\bar{\lat L}}\left(\frac{H}{\det \lat B'}\right)^{m-d'}.
\end{equation}
Here we used the fact that $H/\det \lat B' \geq \det \lat B / \det \lat B' \sim \lambda_{d'+1}(\lat B) \gg \lambda_m(\bar{\lat L})$.

For a general $j$, by inductive hypothesis what we need to estimate is
\begin{equation} \label{eq:temptem}
\sum_{\lat C} \left(\frac{\det \lat C}{\det\bar{L}}\right)^{j-1}\left(\frac{H}{\det \lat C}\right)^{m-1-d'}
\end{equation}
where the sum is over all $\lat C \in \Gr(\bar{\lat L}, d'+1)$ such that $\lat C_{d'} = \lat B'$ and $\lambda_{d'+1}(\lat C) \gg \lambda_m(\bar{\lat L})$. In addition, $\lat C$ must satisfy $\det \lat C \ll \det \lat B'(H/\det \lat B')^{1/j} =: h$ say, since $\lambda_{d'+1}(\lat B) \ll (H/\det \lat B')^{1/j}$.

From the (proof of) case $j=1$, the number of $\lat C$ with $\lat C'_{d'} = \lat B'$, $\lambda_{d'+1}(\lat C) \gg \lambda_m(\bar{\lat L})$, and $\det \lat C \leq t$ is
\begin{align*}
&\ll \frac{t^{m-d'}}{\det\bar{\lat L}\det \lat B'^{m-1-d'}} &\mbox{  if $\frac{t}{\det \lat B'} \geq \mathrm{(const)}\cdot \lambda_m(\bar{\lat L})$,} \\
&\ll \frac{\det \lat B'}{\det \bar{\lat L}} \lambda_m(\bar{\lat L})^{m-d'} &\mbox{  otherwise.}
\end{align*}
But $t \geq \det \lat C \sim \det \lat B' \cdot \lambda_{d'+1}(\lat C) \gg \det \lat B' \cdot \lambda_m(\bar{\lat L})$, so we may disregard the latter possibility.


Therefore, we can apply Lemma \ref{lemma:r-s}, the Riemann-Stieltjes argument in the previous section, and deduce that \eqref{eq:temptem} is bounded by a constant times
\begin{equation} \label{eq:error1int}
\int_0^{h} \frac{t^{m-d'}}{\det\bar{\lat L}\det \lat B'^{m-1-d'}} \cdot t^{-m+1+d'+j}\frac{H^{m-d'-1}}{(\det\bar{\lat L})^{j-1}}dt,
\end{equation}
which turns out to be equal to a constant times
\begin{equation*}
\left(\frac{\det \lat B'}{\det\bar{\lat L}}\right)^j\left(\frac{H}{\det \lat B'}\right)^{m-d'},
\end{equation*}
as desired.

\end{proof}

We proceed to estimating \eqref{eq:error_noskew}. Thanks to Lemma \ref{lemma:error1}, for some constant $C > 0$ depending only on $n$ such that $\det \lat B' < CH^{d'/d}$ (which exists by Minkowski's second), we can bound it by
\begin{align*}
&H^{(d'+1)(1-i/d)} \sum_{\lat B' \in \Gr(\bar{\lat L}, d')} \frac{f_{CH^{d'/d}}(\lat B')}{(\det \lat B')^{d-i}} \left(\frac{\det \lat B'}{\det\bar{\lat L}}\right)^{d-d'}\left(\frac{H}{\det \lat B'}\right)^{n-1-d'} \\
&= \frac{H^{n-(1+d')i/d}}{(\det\bar{\lat L})^{d-d'}} \sum_{\lat B' \in \Gr(\bar{\lat L}, d')} f_{CH^{d'/d}}(\lat B')(\det \lat B')^{-n+1+i}.
\end{align*}

This can be handled again as in the previous section using Lemma \ref{lemma:r-s}, yielding $|E_{n-1,d'}| + 1$ terms of $H$-degree at most $n-i/d$ satisfying all the miscellaneous conditions such as the scaling invariance.

\subsection{The skewed case}

Now assume that $0 \leq l < n-1$ is the lowest number such that
\begin{equation*}
\lambda_l(\bar{\lat L}) \leq 2^{l}H^{1/d} \mbox{ and } \lambda_{l+1}(\bar{\lat L}) - \lambda_{l}(\bar{\lat L}) > 2^{l}H^{1/d}.
\end{equation*}

As earlier, we again restrict the sum \eqref{eq:error} to those $\lat B \in \Gr(\bar{\lat L}, d)$ for which $0 \leq d' \leq d$ is the lowest number such that 
\begin{equation*}
\lambda_{d'}(\lat B) \leq 2^{d'}H^{1/d} \mbox{ and } \lambda_{d'+1}(\lat B) - \lambda_{d'}(\lat B) > 2^{d'}H^{1/d}.
\end{equation*}
Then we must have $d' \leq l$ and $\lat B_{d'} \subseteq \bar{\lat L}_l$. There is a decomposition
\begin{equation*}
\bar{\lat L} = \bar{\lat L}_l \oplus \lat M,
\end{equation*}
where $\lat M$ is an $n-1-l$ dimensional lattice chosen as follows: take a reduced basis $\{x_1, \ldots, x_{n-1}\}$ of $\bar{\lat L}$ such that $\|x_i\| \sim \lambda_i(\bar{\lat L})$ and $\spn\{x_1, \ldots, x_l\} = \bar{\lat L}_l$. Then we let $\lat M = \spn\{x_{l+1}, \ldots, x_{n-1}\}$. Also, let $\bar{\lat M}$ to be the orthogonal projection of $\lat M$ onto $\spn_\R(\bar{\lat L}_l)^\perp \subseteq \spn_\R(\bar{\lat L})$. An important fact we will use later is that $\lambda_1(\bar{\lat M}) \gg H^{1/d}$ by construction.

We further restrict \eqref{eq:error} to those $\lat B$ for which $\rk \lat B \cap \bar{\lat L}_l = r$ for a fixed $r \in \{d', \ldots, \min(l,d)\}$, and call $\lat B_{(r)} = \lat B \cap \bar{\lat L}_l$. Note that $(\lat B_{(r)})_{d'} = \lat B_{d'}$. We also let $\lat A \subseteq \bar{\lat M}$ be the projection of $\lat B$ onto $\bar{\lat M}$. Clearly $\det \lat B = \det \lat B_{(r)} \det \lat A$, and since $\det \lat A \gg H^{(d-r)/d}$ we have $\det \lat B_{(r)} \ll H^{r/d}$.

Our considerations so far lead us to bound the restriction of \eqref{eq:error} by, for some constant $C>0$,
\begin{align*}
&H^{(d'+1)(1-i/d)}\sum_{\lat B \in \Gr(\bar{\lat L},d)} \frac{f_{H}(\lat B)}{\left(\det \lat B_{d'}\right)^{d-i}} \\
&\ll H^{(d'+1)(1-i/d)}\sum_{\lat B_{(r)} \in \Gr(\bar{\lat L}_l, r)} \frac{f_{CH^{r/d}}(\lat B_{(r)})}{(\det (\lat B_{(r)})_{d'})^{d-i}} \sum_{\lat A \in \Gr(\bar{\lat M}, d-r)} f_{H/\det \lat B_{(r)}}(\lat A).
\end{align*}
Using the induction hypothesis on our main theorem, and the fact that $\lambda_1(\bar{\lat M}) \gg H^{1/d}$, we can rewrite the inner sum so that this becomes
\begin{equation*}
\ll H^{(d'+1)(1-i/d)}\sum_{\lat B_{(r)} \in \Gr(\bar{\lat L}_l, r)} \frac{f_{CH^{r/d}}(\lat B_{(r)})}{(\det (\lat B_{(r)})_{d'})^{d-i}} \sum_{\gamma \in \{\gamma_j: j \in E_{n-1-l,d-r} \} \atop \cup \{n-1-l\}} \left(\frac{H}{\det \lat B_{(r)}}\right)^\gamma H^{-\gamma(d-r)/d}.
\end{equation*}

Let us look at one $\gamma$ at a time, and consider
\begin{align*}
&H^{\frac{r}{d}\gamma + (d'+1)(1-\frac{i}{d})} \sum_{\lat B_{(r)} \in \Gr(\bar{\lat L}_l, r)} \frac{f_{CH^{r/d}}(\lat B_{(r)})}{(\det (\lat B_{(r)})_{d'})^{d-i}} \frac{1}{(\det \lat B_{(r)})^\gamma} \\
&\ll H^{\frac{d'}{d}\gamma + (d'+1)(1-\frac{i}{d})} \sum_{\lat B_{(r)} \in \Gr(\bar{\lat L}_l, r)} \frac{f_{CH^{r/d}}(\lat B_{(r)})}{(\det (\lat B_{(r)})_{d'})^{d-i+\gamma}}.
\end{align*}

By Lemma \ref{lemma:error1} as in the previous ``not skewed'' section, we obtain that this is
\begin{align}
&= H^{\frac{d'}{d}\gamma + (d'+1)(1-\frac{i}{d})} \sum_{\lat B' \in \Gr(\bar{\lat L}_l, d')} \frac{f_{\ll H^{d'/d}}(\lat B')}{(\det \lat B')^{d-i+\gamma}} \sum_{\lat B_{(r)} \in \Gr(\bar{\lat L}_l, r) \atop (\lat B_{(r)})_{d'} = \lat B'} f_{\ll H^{r/d}}(\lat B_{(r)}) \notag \\ 
&\ll H^{\frac{d'}{d}\gamma + (d'+1)(1-\frac{i}{d})} \sum_{\lat B' \in \Gr(\bar{\lat L}_l, d')} \frac{f_{\ll H^{d'/d}}(\lat B')}{(\det \lat B')^{d-i+\gamma}} \left(\frac{\det \lat B'}{\det \bar{\lat L}_l}\right)^{r-d'} \left(\frac{H^{r/d}}{\det \lat B'}\right)^{l-d'} \notag \\
&= \frac{H^{\frac{d'}{d}\gamma + \frac{r}{d}(l - d') + (d'+1)(1-\frac{i}{d})}}{(\det \bar{\lat L}_l)^{r-d'}} \sum_{\lat B' \in \Gr(\bar{\lat L}_l, d')} f_{\ll H^{d'/d}}(\lat B')(\det \lat B')^{-d+i-\gamma+r-l}. \label{eq:arf}
\end{align}

Applying Lemma \ref{lemma:r-s}, it is seen that \eqref{eq:arf} may be bounded by at most $|E_{l,r}|+1$ error terms. We need to make sure that the $H$-degree of those terms are strictly below $n$. Here we only discuss the terms of the highest degrees, as the rest can be dealt with in a similar fashion.

If $-d+i-\gamma+r \neq 0$, estimating the sum in \eqref{eq:arf} using Lemma \ref{lemma:r-s} yields a term of $H$-degree $\frac{d'}{d}(-d+i-\gamma+r)$. Therefore, the $H$-degree of \eqref{eq:arf} equals
\begin{equation*}
\frac{rl}{d} + 1 - \frac{i}{d},
\end{equation*}
which attains its maximum $n - i/d$ only if $r=d$ and $l = n-1$. But recall that we are assuming $l < n-1$.

If $-d+i-\gamma+r = 0$, the sum is of size $O(\log H)$, in which case we can say that, for a small $\eta > 0$, the $H$-degree is $\leq n - i/d - id'/d + \eta$ if $d' \neq 0$, and is $\leq n-1-i/d + \eta$ if $d' = 0$.

\subsection{The number of the error terms}

We summarize and estimate the maximum number of error terms arising from our estimate of \eqref{eq:error} so far. If $\bar{\lat L}$ is ``not skewed,'' then our estimate yielded $(d+1)E_{n-1}$ error terms, where we write
\begin{equation*}
E_{n-1} = \max_{0 \leq d' \leq n-1} |E_{n-1,d'}| + 1,
\end{equation*}
and we understand $E_{n-1,0}$ to be the empty set.

As for the skewed case, it is really $n-1$ separate cases corresponding to the parameter $0 \leq l < n-1$, and for each $l$ we obtained at most $(l+1)E_{n-1-l}E_l$ error terms. Hence, regardless of $H$ and $d$, we are able to estimate \eqref{eq:error} using at most
\begin{equation*}
\sum_{l=0}^{n-1} (l+1)E_{n-1-l}E_{l}
\end{equation*}
terms.


\section{Summary, and a proof of Theorem \ref{thm:main}}

\subsection{A polynomial expression for $P(\lat L,d,H)$}
Summing up all our work so far, we have that

\begin{equation} \label{eq:last1}
P^1(\lat L, d, H) = \sum_{k=1}^{H/\varepsilon} \left( \prod_{p | k \atop p^\alpha \| k} p^{\alpha d} \left(1 - \frac{1}{p^d}\right)\right)\left(\frac{a(n,d)}{(\det \lat L)^d}\frac{\zeta(n)}{\zeta(n-d)} H^nk^{-n} + O\left(\sum_{j \in E^{(2)}} c_j H^{\gamma_j} k^{-\gamma_j}\right)\right)
\end{equation}
where $\varepsilon = \min_{\lat B \in \mathrm{Gr}(\Z^{n-1},d)} \det(\lat B\bar{L})$, $E^{(2)}$ is an index set of cardinality at most
\begin{equation*}
E_{n-1} + d\sum_{l=0}^{n-1}(l+1)E_{n-1-l}E_l 
\end{equation*}
(collecting all error terms from the previous two sections),
and each $c_j$ is a reciprocal of products of $\lambda_i(\bar{\lat L})$'s and $\|v_n\|$ so that $c_j H^{\gamma_j}$ is scale-invariant. In this section, we will estimate the sum \eqref{eq:last1}, and then make a choice of $v_n \in \lat L$ so that the dependence on $\lambda_i(\bar{\lat L})$'s turns into dependence on $\lambda_i(\lat L)$'s. This will prove our main theorem.

We treat \eqref{eq:last1} one monomial at a time. The highest degree term contributes
\begin{equation*}
\sum_{k=1}^{H/\varepsilon} \left( \prod_{p | k \atop p^\alpha \| k} p^{\alpha d} \left(1 - \frac{1}{p^d}\right)\right)\left(\frac{a(n,d)}{(\det \lat L)^d}\frac{\zeta(n)}{\zeta(n-d)} H^nk^{-n}\right).
\end{equation*}

The corresponding infinite series, by Lemma \ref{lemma:zeta}, equals
\begin{equation*}
\frac{a(n,d)}{(\det \lat L)^d}H^n,
\end{equation*}
the desired main term. It remains to bound the tail, which we can, up to a constant factor, approximate as
\begin{equation*}
\frac{1}{(\det \lat L)^d} \sum_{k > H/\varepsilon} H^nk^{d-n},
\end{equation*}
which is of size
\begin{equation*}
\frac{H^{d+1}\varepsilon^{n-d-1}}{(\det \lat L)^d}.
\end{equation*}

We need to show that $\varepsilon^{n-d-1}/(\det \lat L)^d$ is bounded by a reciprocal of a product of $\lambda_i(\lat L)$'s. Since $\varepsilon \sim \prod_{i=1}^d \lambda_i(\bar{\lat L})$ and $\lambda_i(\bar{\lat L}) \leq \lambda_{i+1}(\lat L)$ (which can be seen by projecting a $(i+1)$-dimensional subspace of $\R^n$ onto the orthogonal complement of $v_n$),
we have $\varepsilon \ll \prod_{i=1}^d \lambda_{i+1}(\lat L)$, and thus $\varepsilon^{n-d-1} \ll \prod_{i=1}^d \lambda_{i+1}(\lat L)^{n-d-1}$. On the other hand, $(\det \lat L)^d \sim \prod_{j=1}^n \lambda_j(\lat L)^d$, which contains the factor $\prod_{j=d+2}^n \lambda_j(\lat L)$ $d$ times. For any $i \leq d + 1$, $\lambda_i(\lat L)^{n-d-1} /\prod_{j=d+2}^n \lambda_j(\lat L) \leq 1$, so $\varepsilon^{n-d-1}/(\det \lat L)^d \ll \prod_{j=1}^{d+1} \lambda_j(\lat L)^{-d}$, as desired.

We return to other monomials in \eqref{eq:last1}. For the indices $j$ with $\gamma_j > d + 1$, the sum under consideration is
\begin{equation*}
c_j H^{\gamma_j} \sum_{k=1}^{H/\varepsilon} \prod_{p | k \atop p^\alpha \| k} p^{\alpha d} \left(1 - \frac{1}{p^d}\right)k^{-\gamma_j},
\end{equation*}
which we can bound by the corresponding infinite series and apply Lemma \ref{lemma:zeta}, obtaining $O(c_j H^{\gamma_j})$. If $\gamma_j < d + 1$, the sum is of size
\begin{equation*}
c_j H^{\gamma_j}\sum_{k=1}^{H/\varepsilon}k^{d-\gamma_j} \sim \frac{c_j H^{d+1}}{\varepsilon^{d-\gamma_j+1}},
\end{equation*}
and if $\gamma_j = d + 1$, it is
\begin{equation*}
c_j H^{\gamma_j}\sum_{k=1}^{H/\varepsilon}k^{-1} \sim c_j H^{\gamma_j}\log\frac{H}{\varepsilon} \ll \frac{c_j H^{\gamma_j + \eta}}{\varepsilon^\eta}
\end{equation*}
for any $\eta > 0$. Hence, together with the expression \eqref{eq:cau-bin} of $P^2$, we conclude that

\begin{equation*}
P(\lat L, d, H) = \frac{a(n,d)}{(\det \lat L)^d}H^n + O\left(\sum_{j \in E_{n,d}} b_j H^{\gamma_j}\right)
\end{equation*}
for some index set $E_{n,d}$ of cardinality at most $dn\sum_{l=0}^{n-1}E_{n-1-l}E_l$, and where each $b_j$ is a product of reciprocals of $\lambda_i(\lat L)$'s, $\lambda_i(\bar{\lat L})$'s, and $\|v_n\|$, so that $b_j H^{\gamma_j}$ is scale-invariant.

At this point, choose $v_n$ to be one of the shortest nonzero vectors of $\lat L$. Then the following lemma shows that we can replace $\lambda_{i-1}(\bar{\lat L})$ by $\lambda_i(\lat L)$ for each $i$, so that $b_\gamma$ would depend only on $\lat L$.

\begin{lemma} \label{lemma:LLL}
Recall that $\bar{\lat L}$ is the orthogonal projection of $\lat L$ onto the complement of a vector $v_n \in \lat L$. If we choose $v_n$ to be a shortest nonzero vector of $\lat L$, then $\lambda_{i-1}(\bar{\lat L}) \sim \lambda_i(\lat L)$ for all $i = 2, \ldots, n$.
\end{lemma}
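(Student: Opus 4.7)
The plan is to establish the two matching bounds $\lambda_{i-1}(\bar L) \leq \lambda_i(L)$ and $\lambda_i(L) \ll \lambda_{i-1}(\bar L)$ for $i = 2, \ldots, n$; despite the name of the lemma, no LLL reduction is actually required, only the fact that $v_n$ realizes the shortest length. The upper bound is immediate: pick linearly independent $u_1, \ldots, u_n \in L$ with $\|u_j\| = \lambda_j(L)$, and, using $\|v_n\| = \lambda_1(L)$, arrange $u_1 = v_n$. Projecting onto $v_n^\perp$ kills $u_1$ but leaves $\bar u_2, \ldots, \bar u_n \in \bar L$: these are linearly independent, since any nontrivial relation among them would place a nontrivial combination of $u_2, \ldots, u_n$ in $\R v_n$, contradicting the linear independence of the $u_j$. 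As projection is norm-decreasing, $\|\bar u_j\| \leq \lambda_j(L)$, yielding $\lambda_{j-1}(\bar L) \leq \lambda_j(L)$.

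The substantive step is the reverse bound, and it reduces to first showing $\lambda_1(\bar L) \gg \|v_n\|$. Given any nonzero $\bar w \in \bar L$, lift it to $w = \bar w + t v_n \in L$ and, by subtracting an integer multiple of $v_n$, assume $|t| \leq 1/2$. Then $w \neq 0$, and the shortest-vector property of $v_n$ forces $\|v_n\|^2 \leq \|w\|^2 = \|\bar w\|^2 + t^2 \|v_n\|^2 \leq \|\bar w\|^2 + \|v_n\|^2/4$, so $\|\bar w\| \geq (\sqrt{3}/2)\|v_n\|$, and hence $\lambda_1(\bar L) \geq (\sqrt{3}/2)\lambda_1(L)$.

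Given this, fix $i \geq 2$ and choose linearly independent $\bar w_1, \ldots, \bar w_{i-1} \in \bar L$ with $\|\bar w_j\| = \lambda_j(\bar L)$. Lift each as $w_j = \bar w_j + t_j v_n \in L$ with $|t_j| \leq 1/2$; then
\begin{equation*}
\|w_j\|^2 = \|\bar w_j\|^2 + t_j^2 \|v_n\|^2 \leq \lambda_{i-1}(\bar L)^2 + \tfrac{1}{4}\|v_n\|^2 \ll \lambda_{i-1}(\bar L)^2,
\end{equation*}
using $\|v_n\| \ll \lambda_1(\bar L) \leq \lambda_{i-1}(\bar L)$ from the previous paragraph. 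The vectors $v_n, w_1, \ldots, w_{i-1}$ are linearly independent in $L$, since their images in $L/\langle v_n\rangle \cong \bar L$ are $0, \bar w_1, \ldots, \bar w_{i-1}$, spanning $i-1$ dimensions. Therefore $\lambda_i(L) \leq \max(\|v_n\|, \|w_1\|, \ldots, \|w_{i-1}\|) \ll \lambda_{i-1}(\bar L)$. The only nontrivial point in the whole argument is the lower bound on $\lambda_1(\bar L)$, and it genuinely needs the shortest-vector hypothesis: for a generic choice of $v_n \in L$, projection could collapse $\bar L$ to contain vectors far shorter than $\lambda_1(L)$, and the estimates above would fail.
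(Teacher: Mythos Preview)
Your proof is correct, and it takes a genuinely different route from the paper's. The paper argues via LLL reduction: it chooses an LLL-reduced basis $\{w_1,\ldots,w_n\}$ of $L$ with $w_1=v_n$, observes that the projected vectors $\{\bar w_2,\ldots,\bar w_n\}$ form an LLL basis of $\bar L$, and then invokes the two LLL facts $\|w_i\|\sim\lambda_i(L)$, $\|\bar w_i\|\sim\lambda_{i-1}(\bar L)$ together with the size condition $|\mu_{i1}|\le 1/2$ to compare $\|\bar w_i\|$ and $\|w_i\|$. Your argument bypasses the LLL machinery entirely: the upper bound $\lambda_{i-1}(\bar L)\le\lambda_i(L)$ is the trivial projection bound, and the lower bound is obtained by first proving the sharp inequality $\lambda_1(\bar L)\ge(\sqrt{3}/2)\lambda_1(L)$ directly from the shortest-vector hypothesis, then lifting vectors realizing the successive minima of $\bar L$ with controlled $v_n$-component. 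What your approach buys is that it is fully self-contained and even yields explicit constants (e.g.\ $\lambda_i(L)\le(2/\sqrt{3})\,\lambda_{i-1}(\bar L)$), whereas the paper's proof is shorter to state but imports the LLL theory and leaves the constants implicit. Both ultimately hinge on the same geometric fact --- that the $v_n$-coefficient of a suitably chosen lift can be taken in $[-1/2,1/2]$ --- so the difference is really one of packaging.
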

\begin{proof}
Let $\{w_1, \ldots, w_n\}$ be a reduced basis of $\lat L$ containing $v_n = w_1$. Then, writing $\bar{w}_i$ for the projection of $w_i$ to the complement of $v_n$, $\{\bar{w}_2, \ldots, \bar{w}_n\}$ is a reduced basis of $\bar{\lat L}$. Therefore, $\|w_i\| \sim \lambda_i(\lat L)$ and $\|\bar{w}_i\| \sim \lambda_{i-1}(\bar{\lat L})$.

On the other hand, by the definition of a reduced basis, $\|\bar{w}_i\|^2 = \|w_i\|^2 - \mu^2\|w_1\|^2$ for some $|\mu| \leq 1/2$. This immediately implies $\|\bar{w}_i\| \leq \|w_i\|$, and also, since $\|w_1\| \leq \|w_i\|$, we have $\|\bar{w}_i\| \gg \|w_i\|$, completing the proof.
\end{proof}

\subsection{The number of the error terms}
Let us give a quick, crude estimate of $E_n = \max_{0 \leq d \leq n} |E_{n,d}| + 1$. From the above discussion, we have
\begin{equation*}
E_n \leq n^2\sum_{l=0}^{n-1}E_{n-1-l}E_l.
\end{equation*}
Recall that $E_0 = 1$ by definition. Also, it is clear from Section 2.2 that $E_1 = 2, E_2 = 3, E_3 = 4$. We claim in general that $E_n \leq n^{3n}$ for $n \geq 2$. Indeed, the base case is obvious, and assuming the truth for the $n-1$ case, it follows from the above inequality that
\begin{equation*}
E_n \leq n^2 \cdot n \cdot n^{3(n-1)} = n^{3n}.
\end{equation*}

\subsection{The primary error term, $d \leq n/2$}
Finally, we provide an estimate on the primary error term of $P(\lat L, d, H)$, again assuming $\|v_n\| = \lambda_1(\lat L)$. We temporarily assume $d \leq n/2$, and argue the cases $d > n/2$ by duality. Tracing back our estimates so far, there are two candidates for the primary error term: one is from the estimate of the ``main part'' \eqref{eq:maingoal}, which contributes
\begin{equation} \label{eq:primerror1}
O\left(\frac{b(\bar{\lat L})}{\|v_n\|^d}H^{n-b(n-1,d)}\right)
\end{equation}
where $b(\bar{\lat L}) = b_j(\bar{\lat L})$ for $j \in E_{n-1,d}$ corresponding to the leading error term, and the other is from the estimate of the ``error part'' \eqref{eq:errorgoal} in case $i=1$, which contributes
\begin{equation*} 
O\left(\frac{H^{n-b(n,d)}}{(\det \lat L)^{d-1} \det \bar{\lat L}}\right),
\end{equation*}
but by rewriting everything in terms of $\lambda_i(\lat L)$'s with help of Lemma \ref{lemma:LLL}, we find that this is bounded by
\begin{equation} \label{eq:primerror2}
O\left(\frac{H^{n-b(n,d)}}{(\det \lat L)^{d-b(n,d)}(\det \lat L_{n-d})^{b(n,d)}}\right).
\end{equation}
The reason we use this slightly inferior bound is that this possesses a convenient symmetry under duality, as we will see below.

We claim by induction that the main error term has degree $n-b(n,d)$, and that \eqref{eq:primerror1} is no greater than \eqref{eq:primerror2}.
In the base case $n=4, d=2$, this is clear. In general, if $d = n/2$, \eqref{eq:primerror1} is of degree strictly less than $n-b(n,d)$, and we are done. If $d < n/2$, then by the fact that $\|v_n\| = \lambda_1(\lat L)$ and Lemma \ref{lemma:LLL},
\begin{equation*}
\|v_n\|^d(\det \bar{\lat L})^{d-1/d}(\det \bar{\lat L}_{n-d})^{1/d} \sim (\det \lat L)^{d-1/d}(\det \lat L_{n-d})^{1/d},
\end{equation*}
which shows that \eqref{eq:primerror1} has the same size as \eqref{eq:primerror2}, by the inductive hypothesis on $b(\bar{\lat L})$. This proves the claim.

\subsection{The primary error term, $d > n/2$}

Write $d' = n-d$ for short. By the duality theorem \eqref{eq:duality}, $P(\lat L, d, H) = P(\lat L^P, d', H')$, where $H' = H/\det \lat L$. Observe that both has the same main term, that is,
\begin{equation*}
a(n,d)\frac{H^n}{(\det \lat L)^d} = a(n,d')\frac{{H'}^n}{(\det \lat L^P)^{d'}}.
\end{equation*}
Moreover, from the previous section, $P(\lat L^P, d', H')$ has the leading error term of size
\begin{equation*}
\frac{{H'}^{n-b(n,d')}}{(\det \lat L^P)^{d'-b(n,d')}(\det (\lat L^P)_{n-d'})^{b(n,d')}},
\end{equation*}
which is equal to
\begin{align*}
&\frac{H^{n-b(n,d)}}{(\det \lat L)^{n-b(n,d)}(\det \lat L^P)^{n-d-b(n,d)}(\det (\lat L^P)_{d})^{b(n,d)}} \\
&=\frac{H^{n-b(n,d)}}{(\det \lat L)^{d-b(n,d)}(\det \lat L \det(\lat L^P)_d)^{b(n,d)}} \\
&\ll \frac{H^{n-b(n,d)}}{(\det \lat L)^{d-b(n,d)}(\det \lat L_{n-d})^{b(n,d)}},
\end{align*}
as desired.


\section{Proofs of the variants}

\subsection{Formula for $N(\lat L, d, H)$}

An asymptotic formula on $N(\lat L, d, H)$ can be derived easily from that of $P(\lat L, d, H)$ by a standard M\"obius inversion, as in Schmidt (\cite[Sections 3,4,10]{Sch}). As in \cite{Sch}, define $\sigma_d(m)$ inductively by
\begin{align*}
\sigma_1(m) &= 1, \\
\sigma_d(m) &= \sum_{r \mid n} r^{d-1}\sigma_{d-1}(m/r).
\end{align*}

It is shown in \cite[Section 3]{Sch} that $\sigma_d(m)$ equals the number of index $m$ sublattices of a rank $d$ lattice, and that
\begin{align}
\sigma_d(m) \ll (m\log\log m)^{d-1}, \notag \\
\sum_{m=1}^{\infty} \sigma_d(m)/m^n = \prod_{i=1}^d \zeta(n+1-i) \label{eq:meme}
\end{align}
for $d \leq n-1$. From the latter it follows that

\begin{align*}
N(\lat L, d, H) &= \sum_{m=1}^{H/\varepsilon} P(\lat L, d, H/m)\sigma_d(m) \\
&= \frac{a(n,d)}{(\det \lat L)^d}\sum_{m=1}^{H/\varepsilon} (H/m)^n\sigma_d(m) + O\left(\sum_{j \in E_{n,d}} \sum_{m=1}^{H/\varepsilon} b_j(\lat L)(H/m)^{\gamma_j}\sigma_d(m)\right),
\end{align*}
where $\varepsilon := \min_{\lat B \in \Gr(\lat L,d)} \det \lat B$. We handle each sum over $m$ one at a time. For the main term, we have
\begin{equation*}
\sum_{m=1}^{H/\varepsilon} (H/m)^n\sigma_d(m) = \sum_{m=1}^{\infty} (H/m)^n\sigma_d(m) - \sum_{m > H/\varepsilon} (H/m)^n\sigma_d(m).
\end{equation*}
On the right-hand side, the first sum is $H^n\prod_{i=1}^d\zeta(n+1-i)$ by \eqref{eq:meme}, which is exactly what we need. The second sum is bounded by a constant times
\begin{equation*}
\sum_{m > H/\varepsilon} m^{d-n-1+\eta}H^n \sim \frac{H^{d+\eta}}{\varepsilon^{d-n+\eta}}
\end{equation*}
for any $\eta > 0$.

In the error term, for those $j \in E_{n,d}$ with $\gamma_j > d$ we can replace the sum $\sum_{m=1}^{H/\varepsilon}$ by the infinite sum $\sum_{m=1}^\infty$, and apply \eqref{eq:meme}. For those with $\gamma_j \leq d$, we see that
\begin{equation*}
\sum_{m=1}^{H/\varepsilon} \sigma_d(m)m^{-\gamma_j} \ll \sum_{m=1}^{H/\varepsilon} m^{d-1-\gamma_j+\eta} \sim \left(\frac{H}{\varepsilon}\right)^{d-\gamma_j+\eta}
\end{equation*}
for any $\eta > 0$. If $d < n-1$, $\eta$ can be set small enough, so that the secondary term has $H$-degree $n - b(n,d)$. If $d = n-1$, the secondary term has degree $n - 1 + \eta$.

The required properties of the coefficients $b'_j(\lat L)$ can be checked straightforwardly, so we omit the proof.

\begin{remark}
One may wonder what the formula for $N(\lat L, n, H)$ would be. In this case, the skewness of $\lat L$ induces no subtlety at all, and simply
\begin{equation*}
N(\lat L, n, H) = c \cdot \left(\frac{H}{\det \lat L}\right)^n + O\left(\left(\frac{H}{\det \lat L}\right)^{n-1+\eta}\right)
\end{equation*}
for any $\eta > 0$. The proof is identical to the argument in Section 3 of \cite{Sch}; indeed, observe that $N(\lat L, n, H) = N(\Z^n, n, H)$ for any full-rank $\lat L$ of covolume 1.
\end{remark}

\subsection{Formula for $P_\lat S(\lat L, d, H)$}

Let $\lat S \subseteq \lat L$ be a sublattice of rank $e \leq n - d$. By choosing the basis $\{v_1, \ldots, v_n\}$ of $\lat L$ so that $\{v_{n-e+1}, \ldots, v_n\}$ is a basis of $\lat S$, and proceeding analogously as in Section 3 with $\lat L / \lat S$ instead of $\bar{\lat L}$, we obtain an estimate of $P_\lat S(\lat L, d, H)$ analogous to that of $P(\lat L, d, H)$ in \eqref{eq:main}, with the coefficients $b_j$ being a product of reciprocals of $\lambda_i(\lat L)$ and $\lambda_i(\lat L/\lat S)$ (here we identify $\lat L/\lat S$ with the projection of $\lat L$ onto the orthogonal complement of $\spn_\R(\lat S)$ in $\R^n$). However, the reciprocal of $\lambda_i(\lat L/\lat S)$ could be arbitrarily large, which may cause difficulties in some applications of Theorem \ref{thm:main}. For instance, suppose one wants to compute
\begin{equation*}
\sum_{\lat A, \lat B \in \Gr(\lat L,d) \atop \lat A \cap \lat B = \{0\}} f_{H_1}(\lat A)f_{H_2}(\lat B) = \sum_\lat A f_{H_1}(\lat A) \sum_{\lat B \atop \lat A \cap \lat B = \{0\}} f_{H_2}(\lat B).
\end{equation*}
Here one is eventually led to sum the multiples of the reciprocals of $\lambda_i(\lat L/\lat A)$ over sublattices $\lat A$ of height bounded by $H_1$. It seems to be a nontrivial task to show that such a sum is asymptotically small.

Fortunately, with minor modifications to our proof of Theorem \ref{thm:main}, it is possible to provide a formula for $P_\lat S(\lat L, d, H)$ independent of $\lat S$, avoiding the above complication altogether. In this section, we explain what modifications are to be made.

Consider first the base cases $d = 1$ or $n-1$. If $d = 1$, $P_\lat S(\lat L, 1, H) = P(\lat L, 1, H) - P(\lat S, 1, H)$, and bounding the contribution from $P(\lat S, 1, H)$ in terms of $\lat L$ using $\lambda_i(\lat S) \geq \lambda_i(\lat L)$ (because $\lat S \subseteq \lat L$), we obtain the same type of estimate as in \eqref{eq:case1}. In case $d = n-1$, we must have $\rk \lat S = 1$, and thus for $\lat B \in \Gr(\lat  L, n-1)$, $\lat B \cap \lat S = \{0\}$ if and only if $\lat B^\perp \cap \lat S^\perp = \{0\}$; hence the proof follows from the $d=1$ case and the duality theorem.

For other values of $d$, we proceed by induction on $n$, and split $P_\lat S = P^1_\lat S + P^2_\lat S$ as in Section 3. For $P^2_\lat S$, we simply bound it by $P^2$. As for $P^1_\lat S$, observe that, analogously to \eqref{eq:p1rewrite}, we can write
\begin{equation*}
P^1_\lat S(\lat L, d, H) = \sum_{B \in \Gamma \backslash \mathrm{Mat}^{pr}_{d \times (n-1)}(\Z)} \sum_{k \geq 1} \sum_{h} \sum_{b \in \Z^d \atop {(hB; b)\, \mathrm{prim.} \atop \mathfrak L((hB; b)L) \cap \lat S = \{0\} }} f_H\left(
\left(hB; b \right) L \right),
\end{equation*}
where $\mathfrak L(M)$ denotes the lattice spanned by the row vectors of $M$.
The idea is that the main contribution of the above sum comes from those $B$ with $\mathfrak L(B\bar{L}) \cap \bar{\lat S} = \{0\}$, where $\bar{\lat S}$ is the projection of $\lat S$ onto $\bar{\lat L}$. Since $\mathfrak L(B\bar{L}) \cap \bar{\lat S} = \{0\}$ implies $\mathfrak L((hB; b)L) \cap \lat S = \{0\}$, we may write
\begin{align*}
P^1_\lat S = P^{1,1}_\lat S + O(P^{1,2}_\lat S),
\end{align*}
where
\begin{align*}
P^{1,1}_\lat S &= \sum_{B \in \Gamma \backslash \mathrm{Mat}^{pr}_{d \times (n-1)}(\Z) \atop \mathfrak L(B\bar{L}) \cap \bar{\lat S} = \{0\}} \sum_{k \geq 1} \sum_{h} \sum_{b \in \Z^d \atop {(hB; b)\, \mathrm{prim.} }} f_H\left(
\left(hB; b \right) L \right), \\
P^{1,2}_\lat S &= \sum_{B \in \Gamma \backslash \mathrm{Mat}^{pr}_{d \times (n-1)}(\Z) \atop \mathfrak L(B\bar{L}) \cap \bar{\lat S} \neq \{0\}} \sum_{k \geq 1} \sum_{h} \sum_{b \in \Z^d \atop {(hB; b)\, \mathrm{prim.} }} f_H\left(
\left(hB; b \right) L \right).
\end{align*}
But since $P^{1,1}_\lat S + P^{1,2}_\lat S = P^1$, it also holds that
\begin{equation*}
P^1_\lat S = P^1+ O(P^{1,2}_\lat S).
\end{equation*}
Therefore all we need to show is that $P^{1,2}_\lat S$ is small, or equivalently, that $P^{1,1}_\lat S$ is close to $P^1$.

Estimating $P^{1,1}_\lat S$ amounts to considering an analogous expression to \eqref{eq:p1mid} where the sum over $\lat B$ is further restricted to those for which $\lat B \bar{L} \cap \bar{\lat S} = \{0\}$. With the same restriction added to all the subsequent computations, all of the arguments in Section 4 goes through, including Lemma \ref{lemma:r-s}, since by the induction hypothesis $P$ and $P_\lat S$ satisfy the same asymptotics on lattices of rank $\leq n-1$. As for the error terms of $P^{1,1}_\lat S$, we may simply bound them by those of $P^1$, namely \eqref{eq:error} for $i=1, \ldots, d$, and so there are no changes to make. This shows that $P^1$ and $P^{1,1}_\lat S$ satisfy the same asymptotics, and hence that $P^{1,2}_\lat S$ is bounded by terms of leading degree strictly less than $n$.

To count the number of error terms, recall that $P_\lat S = P^1 + O(P^2) + O(P^{1,2}_\lat S)$. $P^1 + O(P^2)$ have $< n^{3n}$ error terms, and since $P^{1,2}_\lat S = P^1 - P^{1,1}_\lat S$, it has at most $2n^{3n}$ error terms. Thus the number of error terms in our formula for $P_\lat S$ is no more than $3n^{3n}$.

It remains to determine the main error term. If $d \leq n/2$, the argument of Section 6.3 carries over, showing that it is the same as in Theorem \ref{thm:main}. If $d > n/2$, extend $\lat S$ to a sublattice $\lat S' \subseteq \lat L$ of rank precisely $n-d$. Then
\begin{equation*}
P_\lat{S'}(\lat L, d, H) \leq P_\lat S(\lat L, d, H) \leq P(\lat L, d, H)
\end{equation*}
on the one hand, and on the other hand,
\begin{equation*}
P_\lat{S'}(\lat L, d, H) = P_{\lat{S'}^\perp}(\lat L^P, n-d, H/\det \lat L).
\end{equation*}
Now we can argue as in Section 6.4, using duality, to show that $P_\lat{S'}$ has the leading error term of the same size, and therefore so does $P_\lat{S}$. This completes the proof of Theorem \ref{cor:2}; for $N_\lat S$, one may proceed as in the last section.

\subsection{Flag varieties of type $(e,d)$}

Let $\lat L \subseteq \R^n$ be a lattice, and let $ 1 \leq e < d < n$. Our goal is to estimate the sum
\begin{align}
&\sum_{\lat W \in \Gr(\lat L, d)} P(\lat W, e, (H/(\det \lat W)^{n-e})^{1/d}) \notag \\ 
&= \sum_{\lat W \in \Gr(\lat L, d)} \frac{a(d, e)H}{(\det \lat W)^n} + O\left(\frac{H^{1-b/d}}{(\det \lat W)^{n-b(n+d-e)/d}(\det \lat W_{d-e})^b}\right). \label{eq:flaggoal}
\end{align}
Here $b = b(d, e)$.
Note that there is also the constraint
\begin{equation} \label{eq:flagH}
(\det \lat W)^{n-e}(\det \lat W_{e})^d \leq H
\end{equation}
coming from the definition of height.

Estimating the sum over the main term is very similar to the computation in Section 4, so we will be brief. The largest term of \eqref{eq:flaggoal}, obtained by applying Lemma \ref{lemma:r-s} to $a(d,e)H/(\det \lat W)^n$, comes from the integral
\begin{align*}
\int_{\varepsilon_d}^{(H/\varepsilon_e^d)^{1/(n-e)}} \frac{a(n,d)t^n}{(\det \lat L)^d}\left(-\frac{a(d,e)H}{t^n}\right)' dt &= \frac{a(n,d)a(d,e)nH}{(\det \lat L)^d} \int_{\varepsilon_d}^{(H/\varepsilon_e^d)^{1/(n-e)}} \frac{dt}{t} \\
&= {a(n,d)a(d,e)}\frac{n}{n-e}\frac{H}{(\det \lat L)^d}\log \frac{H}{\varepsilon_e^d\varepsilon_d^{n-e}},
\end{align*}
where $\varepsilon_e = \min\{\det \lat B : \lat B \subseteq \lat L, \dim \lat B = e\}$ and likewise for $\varepsilon_d$. The smaller terms can be computed similarly, and it turns out the largest error term is of $H$-degree $1$, and the second largest is of $H$-degree $1-b(n,d)/(n-e)$, both coming from the leading error term of $P(\lat L, d, (H/\varepsilon_e^d)^{1/(n-e)})$. One obtains a total of $2|E_{n,d}| < 2E_n$ error terms from here.

The harder part of \eqref{eq:flaggoal} is the sum over the error term, namely
\begin{equation}\label{eq:flaggoal_error}
\sum_{\lat W \in \Gr(\lat L, d)} \frac{H^{1-b/d}}{(\det \lat W)^{n-b(n+d-e)/d}(\det \lat W_{d-e})^b}.
\end{equation}

To bound this, we employ our method in Section 5 above. In order to avoid repetitive and unenlightening computations, we only show how to compute the first two largest $H$-degree terms, and suppress the $\lambda_i(\lat L)$ factors from the expressions for the error terms.

As in Section 5, for each $0 \leq d' \leq d$, we restrict the sum in \eqref{eq:flaggoal_error} to those $\lat W$ for which $d'$ is the smallest number such that
\begin{equation*}
\lambda_{d'}(\lat W) \leq 2^{d'}H^{1/nd} \mbox{ and } \lambda_{d'+1}(\lat W) - \lambda_{d'}(\lat W) > 2^{d'}H^{1/nd}.
\end{equation*}

Then we can bound \eqref{eq:flaggoal_error} by
\begin{equation}\label{eq:flag_mid}
H^{1-\frac{b}{d} + \frac{b}{nd}(d'-d+e)} \sum_{\lat W' \in \Gr(\lat L, d')} \frac{1}{(\det \lat W')^b} \sum_{\lat W \in \Gr(\lat L, d) \atop \lat W_{d'} = \lat W'} \frac{1}{(\det \lat W)^{n-b(n+d-e)/d}}.
\end{equation}

In order to work on the inner sum, we first determine the range of $\det \lat W$. By Minkowski's second, we have $(\det \lat W_{d'})^{d/d'} \ll \det \lat W$. On the other hand, again by Minkowski's second we have $\det \lat W_{d'} H^{(e-d')/nd} \leq \det \lat W_{e}$, so \eqref{eq:flagH} implies
\begin{align}
& (\det \lat W)^{n-e}(\det \lat W_{d'})^d H^{(e-d')/n} \ll H  \notag \\
&\Rightarrow \det \lat W \ll H^\frac{n-e+d'}{(n-e)n}(\det \lat W_{d'})^{-\frac{d}{n-e}} \notag \\
&\Rightarrow \det \lat W_{d'} \ll H^{d'/dn}. \label{eq:flagH2}
\end{align}
In the last line, we also used $\det \lat W_{d'} \ll (\det \lat W)^{d'/d}$.

If $d' = 0$, the outer sum of \eqref{eq:flag_mid} is vacuous, and the inner sum can be computed as in the main term estimate above, yielding $O(H + H^{1-b(n+d-e)/nd} + H^{1-b(n,d)/n})$ up to lower $H$-degree terms. Similarly, we obtain the same bound in case $d' = d$. Each of these cases add at most $2E_n$ error terms to our estimate.

So assume $1 \leq d' \leq d-1$. We will apply Lemma \ref{lemma:error1}. To do so, it is required that $\lambda_{d'+1}(\lat W) \gg \lambda_n(\lat L)$, which is true provided $\lambda_n(\lat L) \ll H^{1/nd}$. Thus, assuming $H$ is sufficiently large, the inner sum of \eqref{eq:flag_mid} is bounded by a constant times
\begin{equation*}
\frac{1}{(\det \lat L)^{d-d'}(\det \lat W')^{n-d}} \int_{(\det \lat W')^{d/d'}}^{H^\frac{n-e+d'}{(n-e)n}(\det \lat W')^{-\frac{d}{n-e}}} \frac{x^{n-d'-1}}{x^{n-b(n+d-e)/d}}dx.
\end{equation*}

We divide into cases according to whether $b(n+d-e)/d - d' < 0$ or not:

\begin{enumerate}[(i)]
\item If $b(n+d-e)/d - d' < 0$, then \eqref{eq:flag_mid} is
\begin{align*}
&\ll \frac{H^{1-\frac{b}{d} + \frac{b}{nd}(d'-d+e)}}{(\det \lat L)^{d-d'}} \sum_{\lat W' \in \Gr(\lat L, d')} \frac{1}{(\det \lat W')^{n-d+b}} \int_{(\det \lat W')^{d/d'}}^\infty \frac{x^{n-d'-1}}{x^{n-b(n+d-e)/d}}dx \\
&\ll \frac{H^{1-\frac{b}{d} + \frac{b}{nd}(d'-d+e)}}{(\det \lat L)^{d-d'}} \sum_{\lat W' \in \Gr(\lat L, d')} \frac{1}{(\det \lat W')^{n-b(n+d-d'-e)/d'}}.
\end{align*}
\eqref{eq:flagH2} imposes the additional constraint $\det \lat W' \ll H^{d'/dn}$ to this sum. Therefore, by Lemma \ref{lemma:r-s}, the contribution from the largest $H$-degree term in our estimate \eqref{eq:main} of $P(\lat L, d', \mathrm{(const)}H^{d'/dn})$ to \eqref{eq:flag_mid} is
\begin{align*}
&\ll \frac{H^{1-\frac{b}{d} + \frac{b}{nd}(d'-d+e)}}{(\det \lat L)^{d}} \int_{\varepsilon_{d'}}^{H^{d'/dn}} \frac{x^{n-1}}{x^{n-b(n+d-d'-e)/d'}} dx \\
&\ll O(H + H^{1-\frac{b}{d} + \frac{b}{nd}(d'-d+e)})
\end{align*}
up to the two largest $H$-degree terms, as desired. The contributions from the smaller terms of $P(\lat L, d', \mathrm{(const)}H^{d'/dn})$ will be discussed later.

\item If $b(n+d-e)/d - d' > 0$, then we instead have
\begin{align*}
&\ll \frac{H^{1-\frac{b}{d} + \frac{b}{nd}(d'-d+e)}}{(\det \lat L)^{d-d'}} \sum_{\lat W' \in \Gr(\lat L, d')} \frac{1}{(\det \lat W')^{n-d+b}} \int_0^{H^\frac{n-e+d'}{(n-e)n}(\det \lat W')^{-\frac{d}{n-e}}} \frac{x^{n-d'-1}}{x^{n-b(n+d-e)/d}}dx \\
&\ll \frac{H^{1-\frac{d'}{n}(1-\frac{b}{d})}}{(\det \lat L)^{d-d'}} \sum_{\lat W' \in \Gr(\lat L, d')} \frac{H^{\frac{d'}{(n-e)n}(\frac{b}{d}(n+d-e)-d')}}{(\det \lat W')^{n-d+b + \frac{d}{n-e}(\frac{b}{d}(n+d-e)-d')}}.
\end{align*}

Estimating in the same manner as in Case (i) above, this is
\begin{equation*}
O(H + H^{1-\frac{d'}{n}(1-\frac{2b}{d}+\frac{d'-b}{n-e})}),
\end{equation*}
and since $1-2b/d+(d'-b)/(n-e) \geq 0$, we are done.

\item In the rare, yet possible, case that $b(n+d-e)/d - d' = 0$, we proceed similarly, and bound \eqref{eq:flag_mid} by
\begin{align*}
&\frac{H^{1-\frac{b}{d} + \frac{b}{nd}(d'-d+e)}}{(\det \lat L)^{d-d'}} \sum_{\lat W' \in \Gr(\lat L,d')} \frac{1}{(\det \lat W')^{n-d+b}} \log\frac{H^{\frac{d'}{(n-e)n}}}{(\det \lat W')^{\frac{d}{n-e}}} \\
&\ll \frac{H^{1-\frac{b}{d} + \frac{b}{nd}(d'-d+e)}}{(\det \lat L)^{d-d'}} \sum_{\lat W' \in \Gr(\lat L,d')} \frac{1}{(\det \lat W')^{n-d+b}} \log\frac{H^{\frac{1}{n}}}{(\det \lat W')^{\frac{d'}{d}}} \\
&\ll \frac{H^{1-\frac{b}{d} + \frac{b}{nd}(d'-d+e)}}{(\det \lat L)^{d}} \int_{\varepsilon_{d'}}^{H^{d'/dn}} x^{d-b-1}\log\left(\frac{H^{1/n}}{x^{d/d'}}\right) dx \\
&= \frac{H^{1-\frac{b}{d} + \frac{b}{nd}(d'-d+e)}}{(\det \lat L)^{d}} \left[ \frac{1}{d-b}\log H^{1/n} \cdot x^{d-b} - \frac{d}{d'}\left( \frac{1}{d-b} x^{d-b} \log x - \frac{1}{(d-b)^2} x^{(d-b)} \right)  \right]_{\varepsilon_{d'}}^{H^{d'/dn}} \\
&= O(H + H^{1-\frac{b}{d} + \frac{b}{nd}(d'-d+e)}\log H).
\end{align*}

\end{enumerate}

In addition, in all three cases above, the contribution from the leading error term of $P(\lat L, d', H^{d'/dn})$ is of size $O(H^{1-b(n,d')d'/dn})$, and the number of error terms obtained is at most $2E_n$.
This completes the error estimate. We note that the related computation in Thunder (\cite{Thu2}), lines 5-6 on p.185, contains a minor error: if $d-e = 1$, the integral there diverges.

In summary, we estimated \eqref{eq:flaggoal} to be
\begin{equation*}
{a(n,d)a(d,e)}\frac{n}{n-e}\frac{H}{(\det \lat L)^d}\log \frac{H}{\varepsilon_e^d\varepsilon_d^{n-e}} + O\left(\sum_{j \in E_{n,e,d}} b_j(\lat L)H^{\gamma_j}\right),
\end{equation*}
where $E_{n,e,d}$ is an index set of cardinality at most $(d+2)\cdot 2E_n \leq 3n^{3n+1}$, $b_j(\lat L)$'s are appropriate inverse products of $\lambda_i(\lat L)$'s, and the implied constant depends on $n$ only.
The largest $\gamma_j$ is $1$, and the second largest is one of
\begin{equation*}
1-\frac{b(n,d)}{n},\  1-\frac{b(d,e)(n-e)}{nd},\  \mathrm{or}\  1-\frac{1}{n}\left(1-\frac{2b(d,e)}{d} + \frac{1-b(d,e)}{n-e}\right).
\end{equation*}
When $d \leq n/2$, it is always $1-b(n,d)/n = 1-1/dn$, but otherwise both of the other two are possible.

\end{document}